\newtheorem{Theo}{Theorem}[section]
\newtheorem{Prop}[Theo]{Proposition}
\newtheorem{Cor}[Theo]{Corollary}
\newtheorem{Lemma}[Theo]{Lemma}
\theoremstyle{definition}
\newtheorem{Exam}[Theo]{Example}
\newtheorem{Defn}[Theo]{Definition}
\def\mystrut(#1,#2){\vrule height #1pt depth #2pt width 0pt}
\newcommand{\rep}{{\rm rep}}
\newcommand{\rrep}{\overline{\mystrut(5,0) {\rm rep\hspace{-1pt}}}\,}
\newcommand{\Hom}{{\rm Hom}}
\newcommand{\End}{{\rm End}}
\newcommand{\mmod}{{\rm mod}}
\newcommand{\Mod}{{\rm Mod}}
\newcommand{\Z}{\mathbb{Z}}
\newcommand{\A}{\mathcal{A}}
\newcommand{\B}{\mathcal{B}}
\begin{document}

\author{Charles Paquette}
\address{Charles Paquette, Department of Mathematics, University of Connecticut, Storrs, CT, 06269-3009, USA. Phone: 1-860-486-1286}
\email{charles.paquette@uconn.edu}

\subjclass[2000]{16G20, 16G70, 16D90}

\keywords{irreducible morphism, almost split sequence, Krull-Schmidt category, locally finite dimensional module, strongly locally finite quiver, Auslander-Reiten quiver}

\begin{abstract} Let $\A$ be a Hom-finite additive Krull-Schmidt $k$-category where $k$ is an algebraically closed field. Let $\mmod \A$ denote the category of locally finite dimensional $\A$-modules, that is, the category of covariant functors $\A \to \mmod k$. We prove that an irreducible monomorphism in $\mmod \A$ has a finitely generated cokernel, and that an irreducible epimorphism in $\mmod \A$ has a finitely co-generated kernel. Using this, we get that an almost split sequence in $\mmod \A$ has to start with a finitely co-presented module and end with a finitely presented one. Finally, we apply our results to the study of $\rep(Q)$, the category of locally finite dimensional representations of a strongly locally finite quiver. We describe all possible shapes of the Auslander-Reiten quiver of $\rep(Q)$.\end{abstract}

\title[Locally finite dimensional representations]{Irreducible morphisms and locally finite dimensional representations}

\maketitle

\section{Introduction}

Throughout this note, $k$ stands for an algebraically closed field. All categories are preadditive $k$-categories. Let $\A$ be a Hom-finite additive Krull-Schmidt $k$-category, that is, a Hom-finite additive $k$-category in which the Krull-Remak-Schmidt decomposition theorem holds for every object. Using Gabriel-Roiter's terminology from \cite{GR}, $\A$ is sometimes called an \emph{aggregate}. By a \emph{module} over $\A$, we mean an additive covariant functor $\A \to \Mod k$, where $\Mod k$ stands for the category of $k$-vector spaces. The category of all such functors with the usual morphisms of functors (that is, the natural transformations) is denoted by ${\rm Mod} \A$.  It is well known (and easy to check) that the category ${\rm Mod} \A$ is an abelian $k$-category. Note that if $\A'$ is a skeleton of $\A$, then the categories ${\rm Mod} \A$ and ${\rm Mod} \A'$ are equivalent. Furthermore, since our functors are additive between additive categories, it is sufficient to define the functors on the indecomposable objects of $\A'$. Let $\mathfrak{S}(\A)$ be a full subcategory of $\A$ whose objects form a complete set of representatives of the isomorphism classes of indecomposable objects of $\A$. Following Gabriel-Roiter \cite{GR}, the category $\mathfrak{S}(\A)$ is called a \emph{spectroid} of $\A$. Since the categories $\Mod \A$ and $\Mod \mathfrak{S}(\A)$ are equivalent, we will rather work with $\Mod \mathfrak{S}(\A)$ and assume from the beginning that the category $\A$ is a \emph{spectroid}, that is, a Hom-finite (but not additive) $k$-category in which all objects have local endomorphism algebras, and distinct objects are not isomorphic. We finally denote by $\mmod \A$ the full subcategory of $\Mod \A$ of all additive covariant functors $\A \to \mmod k$, where $\mmod k$ stands for the category of finite dimensional $k$-vector spaces. A module in $\mmod \A$ is called \emph{locally finite dimensional}. A typical example of a spectroid is the path category $kQ$ of an interval finite quiver $Q$, that is, a quiver $Q=(Q_0, Q_1)$ such that for any pair of vertices $x,y \in Q_0$, there are finitely many paths from $x$ to $y$. Indeed, it follows from the definition of a path category that $kQ$ is Hom-finite if and only if $Q$ is interval finite. In this case, the category $\Mod \A$ is the category ${\rm Rep}(Q)$ of $k$-linear representations of $Q$ and the category $\mmod \A$ is the category ${\rm rep}(Q)$ of locally finite dimensional $k$-linear representations of $Q$.

\medskip

The motivation behind this paper was to get general statements for non-existence of almost split sequences that would generalize the results obtained in \cite{Pa} in the setting of categories of the form $\mmod kQ/I$ where $kQ/I$ is a spectroid. It turns out that only by using new properties of irreducible morphisms in $\mmod \A$, we can restrict the starting term and ending term of an almost split sequence in $\mmod \A$. Indeed, if $f: M \to N$ is an irreducible monomorphism in $\mmod \A$, then the cokernel ${\rm coker}(f)$ of $f$ is finitely generated. The stronger property that ${\rm coker}(f)$ is finitely presented is not true in general. However, if $M$ is finitely generated or if $N \to {\rm coker}(f)$ is irreducible, then ${\rm coker}(f)$ is finitely presented. We get a similar statement for an irreducible epimorphism $M \to N$ with $N \in \mmod \A$. As a consequence, we get the following.

{\renewcommand{\theTheo}{\ref{Theorem1}}
\begin{Theo} Let $0 \to L \to M \to N \to 0$ be an almost split sequence in $\mmod \A$. Then $L$ is finitely co-presented and $N$ is finitely presented.
\end{Theo}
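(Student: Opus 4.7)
The plan is to derive the theorem as a direct consequence of the two refined results announced in the paragraph preceding the statement: an irreducible monomorphism in $\mmod \A$ has finitely generated cokernel, with the stronger conclusion of finite presentation holding as soon as the induced map from its codomain to the cokernel is again irreducible, together with the dual assertion for an irreducible epimorphism. Both results will be applied to the two constituent maps of the almost split sequence.

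Write the sequence as $0 \to L \xrightarrow{f} M \xrightarrow{g} N \to 0$, and recall the standard fact that in any almost split sequence both $f$ and $g$ are irreducible; in particular $f$ is an irreducible monomorphism and $g$ is an irreducible epimorphism.

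To obtain that $N$ is finitely presented, I would apply the strengthened cokernel statement to $f$. Its cokernel is $N$, and the canonical map $M \to {\rm coker}(f)$ is precisely $g$, which is irreducible. The criterion therefore applies and yields that $N = {\rm coker}(f)$ is finitely presented. Dually, to obtain that $L$ is finitely co-presented, I would apply the strengthened kernel statement to $g$: the kernel of $g$ is $L$, and the inclusion $\ker(g) \to M$ is $f$, which is again irreducible. The dual criterion then gives that $L = \ker(g)$ is finitely co-presented.

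The real work of the theorem is thus absorbed into the two refined irreducibility results, and the main point at this final stage is hypothesis-matching. One cannot invoke the alternative clauses (``source finitely generated'' for the cokernel result, or ``target finitely co-generated'' for the kernel result) because nothing a priori is known about $L$ or $N$ beyond what the almost split sequence provides; what is always available is the irreducibility of the companion map in the sequence, which is exactly what the irreducible-morphism clause of each criterion requires. Once this observation is in place the theorem follows immediately.
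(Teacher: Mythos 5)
Your proposal is correct and matches the paper's own argument: the paper likewise observes that both maps in an almost split sequence are irreducible and then invokes Proposition \ref{PropMain} for $N$ and Proposition \ref{PropMainDual} for $L$, with the irreducibility of the companion map supplying exactly the hypothesis of clause $(2)$ in each. Your explicit identification of which clause applies is a useful clarification of the paper's terser citation, but the route is the same.
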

\addtocounter{Theo}{-1}}

Conversely, it has been proven by Auslander \cite{Aus_Survey} that if $N$ is indecomposable non-projective and finitely presented, then there is an almost split sequence ending in $N$ in $\mmod \A$. Similarly, if $L$ is indecomposable non-injective and finitely co-presented, then there is an almost split sequence starting in $L$ in $\mmod \A$. Hence, the above theorem tells us that Auslander gave all the almost split sequences in $\mmod \A$.

\medskip

If we specialize to the hereditary case, we get an interval finite quiver $Q$ with a faithful and surjective functor $kQ \to \A$, which is an isomorphism if and only if the infinite radical of $\A$ vanishes. Assuming that the infinite radical vanishes and that the simple modules are finitely presented and finitely co-presented, we get $\A \cong kQ$ where $Q$ is strongly locally finite. In this case, $\mmod \A$ is the category ${\rm rep}(Q)$ of locally finite dimensional $k$-linear representations of $Q$. Let $M$ be indecomposable in $\rep(Q)$. If $M$ is not finitely presented, we show that there is at most one indecomposable object $L$ with an irreducible morphism $L \to M$ and there is no irreducible morphism $L^2 \to M$. If $M$ is not finitely co-presented, we show that there is at most one indecomposable object $N$ with an irreducible morphism $M \to N$ and there is no irreducible morphism $M \to N^2$. Using this together with the results obtained in \cite{Pa_rrep} allow us to describe all possible shapes of the Auslander-Reiten quiver of $\rep(Q)$. By a \emph{quasi-wing}, we mean a full and convex subquiver of $\Z\mathbb{A}_\infty$ generated by quasi-simple vertices and by a \emph{linear quiver}, we mean a connected subquiver of $\cdots \to \circ \to \circ \to \cdots$. A \emph{star quiver} is a finite quiver to which we attach a finite number of rays and co-rays with no common vertices. We get the following, which is a simplified version of Theorem \ref{Theorem2}.

{\renewcommand{\theTheo}{(Simplified version of \ref{Theorem2})}
\begin{Theo} Let $Q$ be connected infinite and strongly locally finite. The Auslander-Reiten quiver of $\rep(Q)$ has the following connected components.
\begin{enumerate}[$(1)$]
    \item A unique preprojective component $\mathcal{P}_Q$, which is a full predecessor closed subquiver of $\mathbb{N}Q^{\rm op}$.
\item A unique preinjective component $\mathcal{I}_Q$, which is a full successor closed subquiver of $\mathbb{N}^-Q^{\rm op}$.
\item An infinite number of quasi-wings, if $Q$ is not infinite Dynkin. Otherwise, a finite number of quasi-wings.
\item Some additional linear quivers, if and only if $Q$ is not a star quiver.
\end{enumerate}
\end{Theo}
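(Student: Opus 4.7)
My plan is to partition the Auslander--Reiten quiver of $\rep(Q)$ according to whether its indecomposable vertices are finitely presented, finitely co-presented, both, or neither, and then describe each family of components separately. The three main tools are Theorem~\ref{Theorem1}, the restrictions on irreducible morphisms into (resp.\ out of) non-finitely-presented (resp.\ non-finitely-co-presented) modules stated in the introduction, and the shape results of \cite{Pa_rrep}.

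For (1) and (2), the starting point is that every indecomposable projective $P_x$ ($x\in Q_0$) is finitely presented because $Q$ is strongly locally finite. The irreducible morphisms $P_y\to P_x$ indexed by the arrows $y\to x$ of $Q$ glue the family $\{P_x\}$ into a single connected AR-component, which I would call $\mathcal{P}_Q$. Auslander's existence theorem (recalled just after Theorem~\ref{Theorem1}) then allows me to knit rightwards by $\tau^{-}$; each step stays in $\rep(Q)$ and preserves finite presentation. Comparing the resulting meshes with those of $\mathbb{N}Q^{\rm op}$, and using that knitting terminates exactly where an injective translate appears, I would identify $\mathcal{P}_Q$ with a full, predecessor-closed subquiver of $\mathbb{N}Q^{\rm op}$. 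The dual argument, starting from the injectives $I_x$ and knitting leftwards by $\tau$, produces the unique preinjective component $\mathcal{I}_Q$ inside $\mathbb{N}^-Q^{\rm op}$.

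For (3) and (4), let $\Gamma$ be any AR-component distinct from $\mathcal{P}_Q$ and $\mathcal{I}_Q$; then $\Gamma$ contains neither a projective nor an injective. If some $M\in\Gamma$ fails to be finitely presented, the restriction stated in the introduction gives that $M$ has at most one immediate predecessor in $\Gamma$, with no doubled arrow; dually when $M$ is not finitely co-presented. When every vertex of $\Gamma$ is both finitely presented and finitely co-presented, the stable translation-quiver structure combined with the shape theorems of \cite{Pa_rrep} forces $\Gamma$ to be a quasi-wing. In the remaining case, iterating the one-arrow restriction squeezes $\Gamma$ into a linear quiver.

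Finally, I would establish the numerical claim in (3) and the star-quiver criterion in (4). For $Q$ infinite Dynkin, the Tits form controls the dimension vectors occurring outside $\mathcal{P}_Q\cup\mathcal{I}_Q$ and confines regular indecomposables to finitely many $\tau$-orbits, hence finitely many quasi-wings; for non-Dynkin $Q$, a wild-type argument constructs infinitely many of them. For (4), a linear component exists iff there is an indecomposable that is simultaneously neither finitely presented nor finitely co-presented, and I would show by comparing rays in $Q$ with rays and co-rays in the AR-quiver that such indecomposables appear exactly when $Q$ fails to be a star quiver. The main obstacle I anticipate is the structural step in the third paragraph: verifying that the combinatorial restrictions on irreducible morphisms, together with the absence of projectives and injectives in $\Gamma$, rigorously force a quasi-wing or a linear quiver and nothing more exotic. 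This is where the full strength of \cite{Pa_rrep} will have to be invoked.
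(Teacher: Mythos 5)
Your outline for parts (1)--(3) is broadly consistent with the paper's route (the paper does not re-knit these components; it simply imports the description of the Auslander--Reiten quiver of $\rrep(Q)$ from \cite[Section 6]{Pa_rrep} together with the fact, recalled at the start of Section 5, that components of $\Gamma_{\rrep(Q)}$ are components of $\Gamma_{\rep(Q)}$). The genuine gap is in your treatment of (4). You stratify the indecomposables by whether they are finitely presented and/or finitely co-presented and claim that a linear component exists if and only if some indecomposable is neither. That criterion is false, because the relevant dichotomy is membership in $\rrep(Q)$, and $\rrep(Q)$ contains many indecomposables that are neither finitely presented nor finitely co-presented: any middle term of an extension of a finitely presented module by a finitely co-presented one qualifies. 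Concretely, for the linearly oriented quiver of type $\mathbb{A}_\infty^\infty$ the representation that is $k$ at every vertex with identity maps is indecomposable and neither finitely presented nor finitely co-presented, yet this quiver is a star quiver, so by the theorem it has no additional linear components; your criterion would predict one, and your claimed equivalence ``such indecomposables appear exactly when $Q$ fails to be a star quiver'' fails for the same example. The input you are missing is Proposition \ref{Proprrep}: an indecomposable outside $\rrep(Q)$ exists if and only if $Q$ is not a star quiver. Since an irreducible morphism between indecomposables with one term in $\rrep(Q)$ has both terms in $\rrep(Q)$, the components split cleanly into those inside $\rrep(Q)$ (which are exactly the components of types (1)--(3) by \cite{Pa_rrep}) and those outside; Proposition \ref{LastProp} and its dual apply to every vertex of an outside component precisely because finitely presented or finitely co-presented modules automatically lie in $\rrep(Q)$.

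Two further points. First, knowing that every vertex of an outside component has at most one immediate predecessor and one immediate successor, with no multiple arrows, only forces the component to be a linear quiver \emph{or} an oriented cycle; you need Proposition \ref{LastPropCycle} (no cycle of irreducible morphisms between indecomposables) to exclude the latter, and your proposal never addresses acyclicity. Second, your identification of the quasi-wings as the components all of whose vertices are both finitely presented and finitely co-presented is also not correct: a quasi-wing of $\rrep(Q)$ can contain infinite-dimensional vertices that are only finitely presented, only finitely co-presented, or neither. The finitely presented/co-presented stratification simply does not respect components, which is why the paper works with $\rrep(Q)$ throughout and reduces (4) to Propositions \ref{Proprrep}, \ref{LastProp} and \ref{LastPropCycle}.
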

\addtocounter{Theo}{-1}}

Note that some full, abelian, Hom-finite and Krull-Schmidt subcategories of $\rep(Q)$ have been studied in \cite{BLP, Pa_rrep}, with a description of their Auslander-Reiten quivers. In \cite{BLP}, the Auslander-Reiten quiver of the category $\rep^+(Q) \subseteq \rep(Q)$ of the finitely presented representations of $Q$ is described. By duality, this also gives a description of the Auslander-Reiten quiver of the category $\rep^-(Q)\subseteq \rep(Q)$ of the finitely co-presented representations. In \cite{Pa_rrep}, a full subcategory $\rrep(Q)$ of $\rep(Q)$ is introduced, which contains both $\rep^+(Q), \rep^-(Q)$. This category is abelian, Hom-finite and Krull-Schmidt, and the description of the Auslander-Reiten quiver of $\rrep(Q)$ is given as in the above theorem, where only points $(1), (2), (3)$ are considered. It is also shown in \cite{Pa_rrep} that the connected components of the Auslander-Reiten quiver of $\rrep(Q)$ are connected components of the Auslander-Reiten quiver of $\rep(Q)$. Therefore, the above theorem closes the loop in the description of the Auslander-Reiten quiver of $\rep(Q)$ by adding point $(4)$. It is worth noting that the category $\rep(Q)$ is, in general, neither Hom-finite nor Krull-Schmidt.

\section{Basic facts, quivers and hereditary categories}

In this first introductory section, we collect some basic facts about $\mmod \A$ and we later specialize to the case where $\mmod \A$ is hereditary which happens, for instance, when $\A = kQ$ with $Q$ interval finite.

\medskip

Let $M,N \in \mmod \A$ and let $f: M \to N$ be a morphism. Recall that $f$ is explicitly given by a family $$(f_a: M(a) \to N(a))_{a \in \A_0}$$ of linear maps such that for any $\alpha \in \A(c,d)$, we have $f_dM(\alpha) = N(\alpha)f_c$. There is a duality $\mathcal{D}: \mmod \A \to \mmod \A^{\rm op}$, which is defined as follows. Let $D = \Hom_k(-, k)$ be the standard duality.  For $a \in \A_0$, we set $\mathcal{D}(M)(a) = D(M(a))$ and for $\alpha \in \A(c,d)$, we set $\mathcal{D}(M)(\alpha) = D(M(\alpha)): D(M(d)) \to D(M(c))$. This defines a module $\mathcal{D}(M)$ in $\mmod \A^{\rm op}$. Now, we define a morphism $\mathcal{D}(f): \mathcal{D}(N) \to \mathcal{D}(M)$ as follows. For $a \in \A_0$, we set $\mathcal{D}(f)_a = D(f_a): \mathcal{D}(N)(a) \to \mathcal{D}(M)(a)$. Using the functoriality of the standard duality $D$, it is easy to see that $(\mathcal{D}(f)_a)_{a \in \A_0}$ defines a morphism $\mathcal{D}(f)$ from $\mathcal{D}(N)$ to $\mathcal{D}(M)$. This makes $\mathcal{D}$ a functor $\mmod \A \to \mmod \A^{\rm op}$ which is a duality.

\medskip

Observe that for $a \in \A_0$, the projective object $P_a:=\Hom_\A(a,-)$ lies in $\mmod \A$ and is indecomposable. It is called the \emph{projective module at} $a$. Dually, $I_a:=\mathcal{D}\Hom_\A(-,a)$ lies in $\mmod \A$, is indecomposable and is injective in $\mmod \A$. It will be called the \emph{injective module at} $a$. An indecomposable object $M$ in $\mmod \A$ is \emph{finitely generated} if there exist finitely many objects $a_1, \ldots, a_r$ in $\A$ such that we have an epimorphism $\oplus_{i=1}^rP_{a_i} \to M$ and \emph{finitely presented} if, moreover, the kernel of this epimorphism is also finitely generated. Dually, an indecomposable object $M$ in $\mmod \A$ is \emph{finitely co-generated} if there exist finitely many objects $b_1, \ldots, b_s$ in $\A$ such that we have a monomorphism $M \to \oplus_{i=1}^sI_{b_i}$ and \emph{finitely co-presented} if, moreover, the cokernel of this monomorphism is also finitely co-generated. It is clear that $M$ is finitely generated (resp. finitely presented) in $\mmod \A$ if and only if $\mathcal{D}(M)$ is finitely co-generated (resp. finitely co-presented) in $\mmod \A^{\rm op}$.

\medskip

The following is an easy observation that will be handy in the sequel. It can be found in \cite{GR}.

\begin{Prop}[Gabriel-Roiter]
Every indecomposable object in $\mmod \A$ has a local endomorphism algebra.
\end{Prop}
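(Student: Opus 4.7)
The plan is to prove the ring-theoretic local criterion: for every $f \in \End(M)$, either $f$ or $\id_M - f$ is an isomorphism. The main hurdle is that $\End(M)$ need not be finite dimensional (since $\A$ may have infinitely many objects), so Fitting's lemma does not apply directly to $\End(M)$ itself. The idea is to apply Fitting \emph{pointwise} on each $M(a)$ and to check that the resulting decomposition is natural in $a$.

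First I would fix $f \in \End(M)$ and consider, for each $a \in \A_0$, the stable image $M^+(a) := \bigcap_{n\ge 0}\mathrm{Im}(f_a^n)$ and the stable kernel $M^-(a) := \bigcup_{n\ge 0}\mathrm{Ker}(f_a^n)$. Both stabilize after finitely many steps because $M(a)$ is finite dimensional, and Fitting's lemma applied to $f_a \in \End_k(M(a))$ yields $M(a) = M^+(a)\oplus M^-(a)$. Using the naturality relation $f_dM(\alpha)=M(\alpha)f_c$ for every $\alpha \in \A(c,d)$, one checks that $M(\alpha)$ maps $\mathrm{Im}(f_c^n)$ into $\mathrm{Im}(f_d^n)$ and $\mathrm{Ker}(f_c^n)$ into $\mathrm{Ker}(f_d^n)$. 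Hence $M^+$ and $M^-$ are submodules of $M$ and $M = M^+\oplus M^-$ in $\mmod \A$.

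Since $M$ is indecomposable, either $M^-=0$ or $M^+=0$. In the first case, $f_a$ is injective, and by Fitting also surjective, on each finite dimensional $M(a)$; therefore $f_a$ is invertible for every $a$, and defining $(f^{-1})_a := (f_a)^{-1}$ patches together into a natural transformation since $(f_a)^{-1}$ and $(f_d)^{-1}$ automatically commute with $M(\alpha)$. Thus $f$ is an isomorphism in $\mmod \A$. In the second case, $f_a$ is nilpotent on each $M(a)$, so $(\id_M-f)_a = \id_{M(a)} - f_a$ is invertible for each $a$, and the same patching argument shows $\id_M - f$ is an isomorphism in $\mmod \A$.

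Combining the two cases gives the required dichotomy: for every $f \in \End(M)$, at least one of $f$ and $\id_M - f$ is invertible. Since $M \neq 0$ forces $\End(M)\neq 0$, this standard characterization implies that $\End(M)$ is a local ring. The main obstacle, as noted, is purely technical: one must verify that the pointwise Fitting decomposition is compatible with the transition maps $M(\alpha)$ so as to produce genuine subfunctors, and that pointwise invertibility of a natural transformation between locally finite dimensional modules automatically upgrades to a natural inverse. Both verifications reduce to the naturality square, and no genuinely new input is required.
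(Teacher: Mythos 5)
Your proof is correct and follows essentially the same route as the paper: a pointwise Fitting decomposition of each $M(a)$ with respect to $f_a$, a naturality check showing the stable image and stable kernel are subfunctors giving $M = M^+\oplus M^-$, and then the dichotomy (either $f$ is a pointwise isomorphism, hence invertible, or $f$ is pointwise nilpotent, hence $1_M - f$ is invertible). The only cosmetic difference is that the paper inverts $1_M - f$ via the pointwise-finite geometric series $1_M + f + f^2 + \cdots$ rather than by patching the pointwise inverses of $1_{M(a)} - f_a$.
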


\begin{proof} We give an outline of the proof for the sake of completeness. Let $M \in \mmod \A$ be indecomposable and let $f: M \to M$ be a morphism. For each $a \in \A_0$, the map $f_a : M(a) \to M(a)$ admits a diagonal decomposition $f_a = f_{a,i} \oplus f_{a,n}: M(a,i) \oplus M(a,n) \to M(a,i) \oplus M(a,n)$ where $f_{a_i}$ is an isomorphism and $f_{a,n}$ is nilpotent. It is not hard to check that the decompositions $M(a)=M(a,i) \oplus M(a,n)$, for $a \in \A_0$, are such that for any $c,d \in \A_0$ and $\alpha \in \A(c,d)$, $M(\alpha): M(c,i) \oplus M(c,n) \to M(d,i) \oplus M(d,n)$ is a diagonal map. This yields a decomposition $M = M_i \oplus M_n$. If $M_n = 0$, then $f_a$ is an isomorphism for all $a \in \A_0$, which makes $f$ an isomorphism. If $M_i=0$, then $f$ is such that $f_a$ is nilpotent for all $a \in \A_0$. In such a case, observe that $1_M + f + f^2 + \cdots$ defines a morphism $M \to M$ which is the inverse of $1_M-f$. Therefore, we see that $\End(M)$ is local.
\end{proof}

The next lemma is easy but very useful.

\begin{Lemma} \label{LemmaEasy}
Let $M \in \Mod \A$ be finitely generated. Then $M \in \mmod \A$ and $\Hom(M,N)$ is finite dimensional whenever $N \in \mmod \A$.
\end{Lemma}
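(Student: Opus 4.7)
The plan is to fix a finite presentation of $M$ and reduce both assertions to the case of representable projectives, where Hom-finiteness of $\A$ and the Yoneda lemma do the work.

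Since $M$ is finitely generated, pick objects $a_1,\ldots,a_r \in \A_0$ together with an epimorphism $\pi \colon P := \bigoplus_{i=1}^r P_{a_i} \to M$ in $\Mod \A$. Kernels and cokernels in the functor category $\Mod \A$ are computed pointwise, so $\pi$ is surjective at every object. For each $a \in \A_0$, the space $P(a) = \bigoplus_{i=1}^r \Hom_\A(a_i,a)$ is finite-dimensional because $\A$ is Hom-finite, and therefore so is the quotient $M(a)$. This already yields $M \in \mmod \A$.

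For the second assertion, let $N \in \mmod \A$. Applying the left-exact contravariant functor $\Hom(-,N)$ to the epimorphism $\pi$ produces an injection
\[
\Hom(M,N) \hookrightarrow \Hom(P,N) \cong \bigoplus_{i=1}^r \Hom(P_{a_i},N).
\]
By the Yoneda lemma, $\Hom(P_{a_i},N) \cong N(a_i)$, which is finite-dimensional since $N \in \mmod \A$. Hence $\Hom(M,N)$ embeds into a finite-dimensional $k$-vector space, and is itself finite-dimensional.

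There is essentially no obstacle in this argument: it only uses that $\Mod \A$ is abelian with pointwise exact structure, the Hom-finiteness of $\A$, and Yoneda's lemma. The only point worth double-checking is that the Yoneda isomorphism $\Hom(P_a,N) \cong N(a)$ really does hold for covariant $k$-linear functors from the $k$-linear category $\A$ to $\Mod k$, which is standard.
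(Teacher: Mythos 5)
Your argument is correct and coincides with the paper's own proof: both fix an epimorphism $\bigoplus_{i=1}^r P_{a_i} \to M$, use pointwise surjectivity together with the Hom-finiteness of $\A$ to see that each $M(a)$ is finite dimensional, and then embed $\Hom(M,N)$ into $\bigoplus_{i=1}^r \Hom(P_{a_i},N) \cong \bigoplus_{i=1}^r N(a_i)$ via Yoneda. No gaps; the reasoning matches the paper step for step.
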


\begin{proof} There are objects $a_1, \ldots, a_r \in \A_0$ such that there is an epimorphism $$\textstyle{\bigoplus}_{1 \le i \le r}P_{a_i} \to M.$$ For $b \in \A_0$, we have a surjective map $\textstyle{\bigoplus}_{1 \le i \le r}P_{a_i}(b) \to M(b)$, hence a surjective map $\textstyle{\bigoplus}_{1 \le i \le r}\Hom_\A(a_i,b) \to M(b)$. Since $\A$ is Hom-finite, $M(b)$ is finite dimensional. Let $N \in \mmod \A$. We have an injective map
$$\Hom(M,N) \to \textstyle{\bigoplus}_{1 \le i \le r}\Hom(P_{a_i},N).$$
Now, by Yoneda's lemma, for $1 \le i \le r$, the space $\Hom(P_{a_i},N) \cong N(a_i)$ is finite dimensional. Therefore, $\Hom(M,N)$ is finite dimensional.
\end{proof}

Recall that the \emph{radical} ${\rm rad}\,\B$ of a category $\B$ is an ideal of $\B$ such that $\alpha \in \B(a,b)$ lies in ${\rm rad}\,\B$ if and only if, for any $\beta \in \B(b,a)$, $1_a-\beta\alpha$ is an isomorphism. The \emph{infinite radical} ${\rm rad}^\infty \B$ of $\B$ is the intersection $\cap_{i \ge 1}{\rm rad}^i\B$ of all powers of ${\rm rad}\B$. A morphism $\alpha \in \B(a,b)$ is \emph{irreducible} if it is neither a section nor a retraction and whenever $\alpha$ factors as $\alpha = \gamma\beta$, then $\beta$ is a section or $\gamma$ is a retraction. If $a,b$ have local endomorphism algebras, then $\alpha \in \B(a,b)$ is irreducible if and only $\alpha \in {\rm rad}\,\B$ but $\alpha \not\in {\rm rad}^2\B$. Hence, in a spectroid with a vanishing infinite radical, every morphism is a sum of composition of irreducible morphisms.

Sometimes, we will consider the case where $\A$ is \emph{hereditary}, that is, when $\Mod \A$ is a hereditary category.

\begin{Exam} Consider the quiver $Q$ where $Q_0 = \{x_1, x_2, \ldots\} \cup \{y_1, y_2,\cdots\}$, and we have the following arrows: there is a single arrow $x_i \to x_j$ if and only if $j=i+1$ and this arrow is denoted $\alpha_i$; and there is a single arrow $y_s \to y_t$ if and only if $s = t+1$ and this arrow is denoted $\beta_s$. Moreover, for any $i,j$, there is a single arrow $\gamma_{j,i}:x_i \to y_j$. The quiver $Q$ is depicted below.
$$\xymatrix{x_1 \ar[r]^{\alpha_1} \ar[dr] \ar[drr] \ar[drrr] & x_2 \ar[r]^{\alpha_2} \ar[d] \ar[dr] \ar[drr] & x_3 \ar[r]^{\alpha_3} \ar[dl] \ar[d] \ar[dr] & \cdots \\ \cdots \ar[r]_{\beta_4} & y_3 \ar[r]_{\beta_3} & y_2 \ar[r]_{\beta_2} & y_1}$$
We consider $I$ the ideal of $kQ$ generated by all possible $\gamma_{j,i}-  \gamma_{j,i+1}\alpha_i$ and all possible $\gamma_{j-1,i}- \beta_j\gamma_{j,i}$. We define $\A$ to be the quotient $kQ/I$. The irreducible morphisms in $\A$ are given by the arrows $\alpha_i$ and $\beta_j$. Consider the quiver $\Gamma$ obtained from $Q$ by removing all arrows $\gamma_{i,j}$. We have a faithful dense functor $k\Gamma \to \A$ and the infinite radical ${\rm rad}^\infty \A$ of $\A$ is the ideal of $\A$ generated by all arrows $\gamma_{i,j}$.  Hence, we get an equivalence $k \Gamma \to \A/{\rm rad}^\infty \A$. Clearly, $k\Gamma$ is hereditary. One can also check that $\A$ is hereditary. Observe that the projective module $P_{x_1}:= \Hom_\A(x_1,-)$ has a submodule $M$ which is indecomposable and not finitely generated.
\end{Exam}

In order to avoid this \emph{bad} behavior on the submodules of the finitely generated projective modules, most of our hereditary categories will have a vanishing infinite radical. In this case, we get the following, which is essentially due to Gabriel-Roiter; see \cite{GR}.

\begin{Prop}[Gabriel-Roiter]
Suppose that $\A$ is hereditary with a vanishing infinite radical. Then there exists an interval finite quiver $Q$ such that $\A$ is isomorphic to the path category $kQ$.
\end{Prop}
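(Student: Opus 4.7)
The plan is to build the quiver $Q$ explicitly from $\A$ and then produce an isomorphism $kQ \to \A$. For vertices, take $Q_0 = \A_0$, the set of objects of the spectroid $\A$, and for each ordered pair $a, b \in \A_0$, declare the number of arrows from $a$ to $b$ to be $\dim_k {\rm rad}\A(a,b)/{\rm rad}^2\A(a,b)$, which is finite by Hom-finiteness. For each arrow $\alpha : a \to b$ of $Q$, fix a lift $\hat\alpha \in {\rm rad}\A(a,b)$ representing the corresponding basis vector, and define a $k$-linear functor $\pi : kQ \to \A$ that is the identity on objects, sends $e_a$ to $1_a$, and sends a composable path $\alpha_n \cdots \alpha_1$ to $\hat\alpha_n \cdots \hat\alpha_1$. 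It then suffices to verify that $Q$ is interval finite and that $\pi$ is an isomorphism on every Hom space.

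To get control on both sides, I would analyze the radical filtration of the indecomposable projective $P_a = \Hom_\A(a,-)$. Heredity forces ${\rm rad}P_a$ to be projective, and, comparing tops using Krull-Schmidt, the chosen lifts $\hat\alpha$ assemble into a morphism $\phi : \bigoplus_{\alpha \in Q_1,\,s(\alpha) = a} P_{t(\alpha)} \to {\rm rad}P_a$ that becomes an isomorphism on ${\rm rad}P_a/{\rm rad}^2 P_a$ by construction, and is therefore itself an isomorphism of projective covers. Evaluating at $c \in \A_0$ yields
$$
{\rm rad}\A(a,c) = \bigoplus_{\alpha : a \to b \in Q_1} \A(b,c)\,\hat\alpha,
$$
and iterating, using heredity again so that ${\rm rad}^n P_a$ is the radical of a projective and hence projective, gives
$$
{\rm rad}^n\A(a,c) = \bigoplus_{\gamma} \A(t(\gamma),c)\,\hat\gamma,
$$
where $\gamma$ ranges over paths of length $n$ in $Q$ starting at $a$ and $\hat\gamma$ is the composite of the corresponding lifts.

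Quotienting by ${\rm rad}^{n+1}\A(a,c)$, and using that $\A(b,c)/{\rm rad}\A(b,c)$ is $k$ when $b = c$ and $0$ otherwise, one obtains that $\dim_k {\rm rad}^n\A(a,c)/{\rm rad}^{n+1}\A(a,c)$ equals the number of paths of length $n$ from $a$ to $c$ in $Q$, with the classes $\overline{\hat\gamma}$ of those paths providing a basis. Since ${\rm rad}^\infty\A = 0$ and each $\A(a,c)$ is finite dimensional, ${\rm rad}^N\A(a,c) = 0$ for some $N = N(a,c)$, and summing over $n$ yields $\dim_k \A(a,c) = \dim_k kQ(a,c) < \infty$. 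This simultaneously shows that $Q$ is interval finite and that $\pi$ is a linear map between finite dimensional spaces of equal dimension on every Hom space.

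Finally, to verify that $\pi$ is an isomorphism it suffices to show injectivity on each $kQ(a,c)$. Given a nonzero $p = \sum c_\gamma \gamma \in \ker \pi$, let $n$ be the smallest length of a path appearing with a nonzero coefficient; then $\pi(p) \in {\rm rad}^n\A(a,c)$ and its reduction modulo ${\rm rad}^{n+1}\A(a,c)$ is $\sum_{|\gamma|=n} c_\gamma \overline{\hat\gamma} = 0$, contradicting the linear independence of the classes $\overline{\hat\gamma}$ for paths of length $n$ from $a$ to $c$ already established. The main obstacle is the iteration: one must verify, using heredity at every stage, that ${\rm rad}^{n+1}P_a = {\rm rad}({\rm rad}^n P_a)$ inherits the path-indexed decomposition compatibly with the chosen lifts, and that the resulting direct sum $\bigoplus_\gamma P_{t(\gamma)}$ is recognized correctly inside $\mmod\A$. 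Without the hereditary hypothesis, the radical of a projective need not be projective, so the iterative bookkeeping that produces the dimension equality and the basis of path classes would break down.
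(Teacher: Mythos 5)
Your proposal is correct in outline, but it takes a genuinely different route from the paper: the paper does essentially no work here, deferring the existence of $Q$ with $\A \cong kQ$ to a proposition in \cite[Chapter 8]{GR} (using $k = \bar k$ and ${\rm rad}^\infty\A = 0$) and only adding the one-line observation that interval finiteness of $Q$ is forced by Hom-finiteness of $\A$. You instead reconstruct the Gabriel--Roiter argument from scratch: build $Q$ from $\dim_k {\rm rad}\A(a,b)/{\rm rad}^2\A(a,b)$, use heredity to control the radical filtration of $P_a$, and read off the isomorphism on each Hom space from the associated graded. This buys a self-contained proof that makes visible exactly where each hypothesis enters (heredity to kill the relations, $k=\bar k$ to get $\A(a,a)/{\rm rad}\A(a,a)\cong k$ so that tops are one-dimensional and arrow counts are honest $k$-dimensions, ${\rm rad}^\infty\A=0$ plus Hom-finiteness to make the radical filtration of each $\A(a,c)$ terminate), at the cost of several Krull--Schmidt-type facts for infinite direct sums that the citation absorbs. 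Note also a notational clash: the paper uses $t(\alpha)$ for the \emph{tail} of $\alpha$, whereas you use it for the target.

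The one step you should not wave through is ``becomes an isomorphism on ${\rm rad}P_a/{\rm rad}^2P_a$ \ldots and is therefore itself an isomorphism of projective covers.'' Radicals of modules in $\mmod\A$ are \emph{not} superfluous in general in this setting (Example 2.6 of the paper exhibits a nonzero module with zero top over a path category with vanishing infinite radical), so ``iso on tops implies iso'' is not automatic. Both halves can be rescued, but by different means than Nakayama: surjectivity of $\phi$ follows because ${\rm rad}\A(a,c) = \sum_\alpha \A(h(\alpha),c)\hat\alpha + {\rm rad}^n\A(a,c)$ for every $n$, and the descending chain ${\rm rad}^n\A(a,c)$ stabilizes at ${\rm rad}^\infty\A(a,c)=0$ since $\A(a,c)$ is finite dimensional; injectivity follows because heredity makes the epimorphism $\phi$ onto the projective module ${\rm rad}P_a$ split, so $\ker\phi$ is a direct summand of a direct sum of representables, hence (by Azumaya/Crawley--J{\o}nsson--Warfield) itself a direct sum of representables, each with nonzero top --- yet $\ker\phi$ has zero top because $\phi$ is injective on tops by the choice of the $\hat\alpha$. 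With these two points supplied, your iteration and the final dimension count go through as written.
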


\begin{proof} The fact that there exists a quiver $Q$ such that $\A$ is isomorphic to the path category $kQ$ follows from a proposition in \cite[Chapter 8]{GR}, as $k = \bar k$ and ${\rm rad}^\infty \A=0$. If $Q$ is not interval finite, then there are two vertices $x,y \in Q_0$ with infinitely many paths from $x$ to $y$, which makes $kQ \cong \A$ not Hom-finite, which is contrary to our assumption on $\A$.
\end{proof}

For a hereditary category $\A = kQ$ with $Q$ interval finite, we will often restrict to the case where $Q$ is locally finite, for the following reason. A quiver which is locally finite and interval finite is called \emph{strongly locally finite}.

\begin{Prop}
Suppose that $\A$ is hereditary with a vanishing infinite radical, hence $\A \cong kQ$ for an interval finite quiver $Q$. Then the simple modules in $\Mod \A$ are finitely presented and finitely co-presented if and only if $Q$ is locally finite.
\end{Prop}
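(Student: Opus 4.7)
The plan is to pin down the canonical short exact sequence attached to each simple module in $\mmod kQ$, and then translate finite presentation (resp. finite co-presentation) of $S_a$ into a combinatorial condition on the arrows incident to the vertex $a$.

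Fix a vertex $a \in Q_0$, and let $S_a$ be the simple $\A$-module concentrated at $a$ (i.e.\ $S_a(a)=k$ and $S_a(b)=0$ for $b\neq a$). The obvious surjection $\varepsilon: P_a = \Hom_\A(a,-) \to S_a$ sending the identity path at $a$ to $1_k$ has kernel the subfunctor $K$ with $K(b)$ equal to the span of all paths from $a$ to $b$ of length at least one. The first step is to identify this kernel: every such path factors uniquely as $p\cdot \alpha$ for a unique arrow $\alpha: a\to b_\alpha$ starting at $a$ and a unique path $p$ from $b_\alpha$ to $b$. This factorization, together with the description of $P_{b_\alpha}$ as the space of paths starting at $b_\alpha$, produces a natural isomorphism
$$\bigoplus_{\alpha:\, a\to b_\alpha}P_{b_\alpha}\;\xrightarrow{\;\sim\;}\; K,$$
where each summand includes into $P_a$ via pre-composition with the arrow $\alpha$. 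Thus we obtain the short exact sequence
$$0 \longrightarrow \bigoplus_{\alpha:\,a\to b_\alpha} P_{b_\alpha} \longrightarrow P_a \xrightarrow{\;\varepsilon\;} S_a \longrightarrow 0,$$
which, because $\A$ is hereditary, is a projective resolution of $S_a$.

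Now I can read off the conditions. Since $P_a$ is finitely generated, $S_a$ is automatically finitely generated, and $S_a$ is finitely presented if and only if the first syzygy $\bigoplus_{\alpha:\,a\to b_\alpha}P_{b_\alpha}$ is finitely generated. Because each $P_{b_\alpha}$ is indecomposable, this happens precisely when the index set $\{\alpha\in Q_1 \mid s(\alpha)=a\}$ is finite. Dually, using the injective $I_a = \mathcal{D}\Hom_\A(-,a)$ and the analogous short exact sequence
$$0\longrightarrow S_a \longrightarrow I_a \longrightarrow \bigoplus_{\alpha:\, b_\alpha\to a} I_{b_\alpha} \longrightarrow 0$$
in $\mmod \A$ (obtained by applying $\mathcal{D}$ to the resolution of $S_a$ in $\mmod\A^{\mathrm{op}}$ associated with the opposite quiver), $S_a$ is finitely co-presented if and only if $\{\alpha\in Q_1\mid t(\alpha)=a\}$ is finite. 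Combining both, every simple $S_a$ is finitely presented and finitely co-presented iff every vertex of $Q$ has finitely many arrows incident to it, iff $Q$ is locally finite.

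The only point requiring some care is the identification of the kernel of $\varepsilon$ as the direct sum $\bigoplus_\alpha P_{b_\alpha}$, which relies on the unique factorization of nontrivial paths out of $a$ through a unique first arrow. In particular, different arrows $a\to b$ with the same target must contribute distinct copies of $P_b$, so the condition is really on the cardinality of the set of arrows, not on the set of neighbors of $a$. Once this identification is in hand, the hereditary assumption promotes the short exact sequence to a genuine projective resolution, and the equivalence with local finiteness is immediate.
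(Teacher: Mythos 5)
Your proof is correct and follows essentially the same route as the paper: both exhibit the short exact sequence $0 \to \bigoplus_{\alpha}P_{h(\alpha)} \to P_a \to S_a \to 0$ indexed by the arrows starting at $a$, conclude that $S_a$ is finitely presented iff that set is finite, and handle co-presentation by the dual argument with injectives. Your identification of the kernel via unique factorization of paths through their first arrow just makes explicit what the paper states as an observation.
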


\begin{proof}
Since $Q$ has no oriented cycles, an object $M$ in $\Mod \A$ is simple if and only if it is non-zero on exactly one object $a$, and in this case, $M(a)$ is one dimensional. Therefore, the simple objects in $\Mod \A$, up to isomorphism, are indexed by $\A_0$. Fix $a \in \A_0$ and let $S_a$ denote the simple object in $\Mod \A$ such that $S_a(a)$ is non-zero. Let ${\rm Succ}(a)$ be the set of all arrows starting at $a$. Observe that we have a short exact sequence
$$0 \to \textstyle{\bigoplus}_{\alpha \in {\rm Succ}(a)}P_{h(\alpha)} \to P_a \to S_a \to 0$$
where $h(\alpha)$ denotes the head of $\alpha$. Thus, $S_a$ is finitely presented if and only if ${\rm Succ}(a)$ is finite. Similarly, $S_a$ is finitely co-presented if and only if there are finitely many arrows ending in $a$.
\end{proof}

Let $M \in \mmod \A$ where $\A = kQ$ is the path category of a strongly locally finite quiver $Q$. The \emph{top} of $M$, denoted ${\rm top}M$, is a quotient of $M$ defined as follows. Let $a \in \A_0$ and consider the finite set ${\rm Pred}(a)$ of all arrows ending in $a$ in $Q$. Consider the subspace $Z(a):=\sum_{\alpha \in {\rm Pred}(a)} {\rm Im}M(\alpha)$ in $M(a)$. Doing this for all vertex of $Q$ defines uniquely a submodule $Z$ of $M$, called the \emph{radical} of $M$ and denoted ${\rm rad} M$. The top of $M$ is the semi-simple quotient ${\rm top}M:= M/{\rm rad} M$. Consider the projection $M \to {\rm top}M$ and for each $a \in \A_0$, choose a subspace $T_M(a)$ of $M(a)$ such that the restriction $T_M(a) \to ({\rm top}M)(a)$ is an isomorphism. Consider the submodule $M'$ of $M$ generated by all the elements in $T_M(a)$ for $a \in \A_0$. The module $M'$ is called a \emph{top submodule} of $M$. We say that $M$ \emph{has a top} if there exists a top submodule $M'$ of $M$ with $M' = M$. Not every module has a top, as the next example shows.

\begin{Exam}
Consider the infinite quiver $$Q: \;\; \cdots \to \circ \to \circ \to \circ$$ and the module $M \in \mmod kQ$ such that $M(x)=k$ for all $x \in Q_0$ and $M(\alpha) = 1$ for all arrows $\alpha \in Q_1$. Then ${\rm top}M=0$ and any top submodule of $M$ is zero.
\end{Exam}

\begin{Lemma} \label{LemmaProjSub} Let $\A = kQ$ where $Q$ is strongly locally finite. Let $M$ be a submodule of a finitely generated projective module. Then
\begin{enumerate}[$(1)$] \item $M$ has a top and all top submodules of $M$ are equal to $M$;
\item $M$ is a direct sum of finitely generated projective modules.
\end{enumerate}
\end{Lemma}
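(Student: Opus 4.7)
My proof rests on a sub-lemma: if $N \in \mmod \A$ has support contained in the set of successors of $\{a_1, \ldots, a_r\}$ (i.e.\ in the vertices $b$ with $a_i \leq b$ for some $i$, with respect to the path partial order on $Q_0$) and satisfies $N = {\rm rad}\, N$, then $N = 0$. The key observation is that for any $b \in \mathrm{supp}(N)$, the set $\{c \in \mathrm{supp}(N) : c \leq b\}$ is contained in the finite union $\bigcup_i [a_i, b]$, where each interval $[a_i, b]$ is finite by interval-finiteness of $Q$. So it has a minimal element $b_0$. By minimality, every arrow $c \to b_0$ has source $c \notin \mathrm{supp}(N)$, forcing $({\rm rad}\, N)(b_0) = 0$, and therefore $N(b_0) = 0$, contradicting $b_0 \in \mathrm{supp}(N)$.

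For part (1), I fix any choice of complements $T_M(a) \subseteq M(a)$ of $({\rm rad}\, M)(a)$, let $M'$ be the submodule of $M$ generated by $\bigcup_a T_M(a)$, and put $N = M/M'$. The identity $M'(a) + ({\rm rad}\, M)(a) = M(a)$ quickly yields $N = {\rm rad}\, N$. Since $\mathrm{supp}(N) \subseteq \mathrm{supp}(M)$ lies in the successors of the $a_i$'s, the sub-lemma forces $N = 0$, i.e.\ $M' = M$. The choice of $T_M$ being arbitrary, $M$ has a top and every top submodule of $M$ equals $M$.

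For part (2), using (1) I pick a basis $\{t_{a,j}\}_{j \in J_a}$ of each $T_M(a)$ and form the surjection $\Phi : L := \bigoplus_{a, j \in J_a} P_a \to M$ sending each generator $e_{a,j}$ to $t_{a,j}$. The same interval-finiteness argument shows $L \in \mmod \A$. It remains to show $K := \ker \Phi = 0$; for this I establish $K = {\rm rad}\, K$ and invoke the sub-lemma. To verify the equality, I use the length grading $L(b) = L(b)_0 \oplus L(b)_{\geq 1}$: the restriction of $\Phi$ to $L(b)_0$ is an isomorphism onto $T_M(b)$, while $\Phi\bigl(L(b)_{\geq 1}\bigr) \subseteq ({\rm rad}\, M)(b)$, so the length-$0$ part of any $k \in K(b)$ vanishes. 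The unique last-arrow decomposition $L(b)_{\geq 1} = \bigoplus_{\alpha : c \to b} \alpha \cdot L(c)$, combined with injectivity of each $P(\alpha)$ and hence of $M(\alpha) = P(\alpha)|_{M(c)}$, then shows that each component of $k$ along an incoming arrow $\alpha$ lies in $K(c)$, so $k \in ({\rm rad}\, K)(b)$. The sub-lemma applied to $K$ yields $K = 0$, so $M \cong L$, a direct sum of finitely generated projectives.

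The main technical obstacle is the verification $K = {\rm rad}\, K$ in (2): surjectivity of $\Phi$ and the fact that $\Phi$ induces an isomorphism on tops are not by themselves enough to force the kernel to vanish. What tips the balance is the specific structure of $P$ as a finite direct sum of $P_{a_i}$'s in a path category with no oriented cycles (guaranteed by interval finiteness): the length grading inherited by $L$, together with the unique factorization of length-$\geq 1$ paths by their last arrow, make the length-$\geq 1$ components of a kernel element themselves factor through kernel components, allowing the sub-lemma to close the argument.
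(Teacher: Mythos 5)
Your proof is correct, but it takes a genuinely different route from the paper's. For part (1), the paper restricts $M$ and a top submodule $M'$ to the finite convex subquiver of predecessors of a fixed vertex that are successors of the $x_i$, and quotes the classical fact that over a finite acyclic quiver any top submodule is the whole module; you instead prove a Nakayama-type sub-lemma directly (a locally finite dimensional module supported on the successors of finitely many vertices and equal to its own radical vanishes, because interval finiteness yields a minimal vertex of the support) and apply it to $M/M'$. Both arguments hinge on interval finiteness bounding the ``past'' of each support vertex, but yours is self-contained. For part (2) the divergence is larger: the paper takes a direct sum $Q$ of finitely generated projectives with ${\rm top}\,Q \cong {\rm top}\,M$, uses that $M$ itself is projective (a submodule of a projective in the hereditary category $\Mod kQ$) to produce morphisms $f\colon M \to Q$ and $g\colon Q \to M$ inducing isomorphisms on tops, shows both are surjective by part (1), and concludes that $fg$ and $gf$ are automorphisms because $Q$ and $M$ are locally finite dimensional. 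You instead build the covering $\Phi\colon L \to M$ explicitly and kill its kernel by a second application of your sub-lemma, via the length grading and the last-arrow decomposition; this avoids invoking hereditariness (projectivity of $M$) as a black box, at the price of a more hands-on computation. One step you should make explicit: to deduce $u_\alpha \in K(c)$ from $\sum_\alpha M(\alpha)(\Phi_c(u_\alpha)) = 0$, injectivity of each $M(\alpha)$ is not enough by itself; you also need that the subspaces ${\rm Im}\,M(\alpha) \subseteq M(b)$, for distinct arrows $\alpha$ ending at $b$, form a direct sum. This does hold, since ${\rm Im}\,M(\alpha) \subseteq {\rm Im}\,P(\alpha)$ and the latter are spanned by pairwise disjoint sets of path basis vectors of $P(b)$, so it is the same last-arrow decomposition applied to $P$ rather than to $L$; stating it closes the argument.
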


\begin{proof}
There are vertices $x_1, \ldots, x_r$ such that $M$ is a submodule of $P:=\textstyle{\bigoplus}_{1 \le i \le r}P_{x_i}$. Let $M'$ be a top submodule of $M$ with defining spaces $T_M(a)$ for $a \in \A_0$. Fix $a \in Q_0$. Consider the full subquiver $\Sigma$ of $Q$ of all predecessors of $a$ which are successors of some $x_i$. Clearly, $\Sigma$ is convex and finite, since $Q$ is interval finite. Consider the restrictions $M_\Sigma, M'_\Sigma, P_\Sigma$ of $M, M', P$ to $\Sigma$, respectively. Now, observe that ${\rm rad}M_\Sigma = ({\rm rad}M)_\Sigma$ and hence, ${\rm top}M_\Sigma = ({\rm top}M)_\Sigma$. For $b \in \Sigma_0$, the subspace $T_M(b)$ of $M_\Sigma (b) = M(b)$, as in the definition of $M'$, is such that the restriction $T_M(b) \to ({\rm top}M_\Sigma)(b)$ is an isomorphism. Hence $M'_\Sigma$ is a top submodule of $M_\Sigma$. Since $\Sigma$ is a finite quiver without oriented cycles, this is well known that $M_\Sigma$ has a top and any top submodule of $M_\Sigma$ has to coincide with $M_\Sigma$. Therefore, $M'_\Sigma = M_\Sigma$. In particular, $M'(a) = M(a)$. Since this is true for all $a \in \A_0$, we get $M' = M$. This proves the first part. Consider now a surjective morphism $Q \to {\rm top}M$ where $Q$ is a direct sum of finitely generated projective modules such that the induced morphism ${\rm top}Q \to {\rm top}M$ is an isomorphism. It follows from the fact that $Q$ is interval finite that $Q$ is locally finite dimensional. Using a similar argument as above, $Q$ has a top and $Q'=Q$ for any choice of top submodule $Q'$ of $Q$. Using the fact that $Q, M$ are projective modules, this yields two morphisms $f: M \to Q$ and $g: Q \to M$ such that the induced morphisms ${\rm top}M \to {\rm top}Q$ and ${\rm top}Q \to {\rm top}M$ are isomorphisms. Since ${\rm Im}g$ defines a top submodule of $M$, it follows from the first part that $g$ is surjective. Similarly, $f$ is surjective. Now, $fg: Q \to Q$ is surjective. Since $Q$ is locally finite dimensional, $fg$ is an isomorphism. Therefore, $g$ is a section and $f$ is a retraction. Considering $gf: M \to M$, we get that $gf$ is an isomorphism and hence that $g$ is a retraction and $f$ is a section. Hence, $f, g$ are isomorphisms and $M$ is isomorphic to $Q$, as wanted.
\end{proof}

\section{Irreducible morphisms}

In this section, $\A$ is a spectroid. In some statements, we will put more restrictions on $\A$ to get stronger statements. This first proposition is one of the core results of this section.

\begin{Prop} \label{PropMain}
Let $f: L \to M$ be an irreducible monomorphism in $\mmod \A$. Then the cokernel ${\rm coker}(f)$ of $f$ is finitely generated.
\begin{enumerate}[$(1)$]
    \item If $L$ is finitely generated, then ${\rm coker}(f)$ is finitely presented.
\item If $M \to {\rm coker}(f)$ is also irreducible, then ${\rm coker}(f)$ is finitely presented.
    \item If $\A$ is hereditary and given by a strongly locally finite quiver with $L$ a finite direct sum of indecomposable modules, then ${\rm coker}(f)$ is finitely presented.
\end{enumerate}
\end{Prop}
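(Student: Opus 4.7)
The plan is to prove the main statement by contradiction and then derive the three refinements. For the main claim, suppose $C := {\rm coker}(f)$ is not finitely generated, and let $\pi : M \to C$ be the projection. Every module in $\mmod \A$ is the directed union of its finitely generated submodules, so for any finitely generated $C' \subsetneq C$ the preimage $M' := \pi^{-1}(C')$ is a proper submodule of $M$ containing $f(L)$, and $f$ factors as $L \xrightarrow{f'} M' \hookrightarrow M$. By irreducibility of $f$, either $f'$ is a section or the inclusion $\iota' : M' \hookrightarrow M$ is a retraction. The latter is impossible: an inclusion is a retraction (i.e., split epi) only when it is an isomorphism, which would force $M' = M$ and hence $C' = C$ finitely generated. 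So $f'$ is a section, giving $M' = f(L) \oplus L'$ with $L' \cong C'$ finitely generated.

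The next step is to combine these local splittings into a global splitting $L \to M$, which contradicts $f$ being irreducible (an irreducible morphism is, by definition, not a section). I would use Zorn's lemma on the poset of pairs $(C^*, L^*)$ where $C^* \subseteq C$ is a submodule and $L^* \subseteq \pi^{-1}(C^*)$ satisfies $\pi^{-1}(C^*) = f(L) \oplus L^*$, ordered by componentwise inclusion; chains have upper bounds (take unions, noting $f(L) \cap \bigcup L^* = 0$), so a maximal element $(C^*, L^*)$ exists. To show $C^* = C$, given $c \in C \setminus C^*$ I would extend $L^*$ to a complement of $f(L)$ in $\pi^{-1}(C^* + \langle c \rangle)$ still containing $L^*$, using the irreducibility-forced splitting of $L \to \pi^{-1}(\langle c \rangle)$ together with the modular law to glue the two complements.

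For part (1), with $L$ and $C$ both finitely generated, $M$ is itself finitely generated; taking an epi $P \to M$ from a finitely generated projective $P$, the kernel of $P \to C$ fits in $0 \to \ker(P \to M) \to \ker(P \to C) \to L \to 0$, so finite presentation of $C$ reduces to finite presentation of $M$, which I expect to follow from irreducibility of $f$ combined with finite generation of $L$ via a controlled factorization argument. For part (2), the dual statement (an irreducible epimorphism has finitely co-generated kernel, proved symmetrically) applied to $M \to C$ gives that $L$ is finitely co-generated; combined with $C$ finitely generated from the main claim, an analogous argument using an injective copresentation of $L$ yields $C$ finitely presented. For part (3), Lemma~\ref{LemmaProjSub} says that in the hereditary strongly locally finite case any submodule of a finitely generated projective is a direct sum of finitely generated projectives; the kernel of a finitely generated projective surjection onto $C$ is then such a direct sum, and the finite indecomposable decomposition of $L$ should bound the number of summands needed.

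The main obstacle is the Zorn-extension step in the main claim: the local splittings supplied by irreducibility at each finitely generated $C' \subsetneq C$ are not a priori compatible, and showing that a maximal pair $(C^*, L^*)$ with $C^* \subsetneq C$ can always be enlarged (rather than requiring $L^*$ to be reconstructed from scratch) is the technical heart of the proof.
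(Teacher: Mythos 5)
Your setup for the main claim matches the paper's: for each finitely generated $C' \subsetneq C$ the preimage $\pi^{-1}(C')$ is a proper submodule of $M$, so irreducibility forces $L \to \pi^{-1}(C')$ to split. But the step you yourself flag as ``the technical heart'' --- gluing these local splittings into a global one --- is a genuine gap, and the Zorn's lemma scheme you outline does not close it. The complements $L^*$ and $L_c$ of $f(L)$ in $\pi^{-1}(C^*)$ and $\pi^{-1}(\langle c\rangle)$ need not be compatible: an element $x+y$ with $x \in L^*$, $y \in L_c$ lands in $f(L)$ whenever $\pi(x) = -\pi(y) \in C^* \cap \langle c\rangle$, and the modular law gives no canonical way to correct $L_c$ without disturbing the complement over $C^*$. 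This is not a removable technicality: a submodule that is a direct summand of every member of a directed system of intermediate submodules (i.e.\ a pure-type submodule) need not be a direct summand of the union, so some finiteness input is indispensable. The paper supplies exactly this input by dualizing: it applies $\Hom(-,L)$ to the split sequences $0 \to L \to X_i \to X_i/L \to 0$, where $X_i$ runs over the submodules with $L \subseteq X_i \subseteq M$ and $X_i/L$ finitely generated, notes via Lemma \ref{LemmaEasy} that each $\Hom(X_i/L,L)$ is finite dimensional so the inverse system satisfies the Mittag-Leffler condition, and concludes that the surjections $\Hom(X_i,L) \to \Hom(L,L)$ survive the inverse limit, producing $g \colon M \to L$ with $gf = 1_L$. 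Without this (or an equivalent) finiteness argument your contradiction is never reached.

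Parts $(1)$--$(3)$ are likewise sketched at a level that does not amount to a proof. For $(1)$ you reduce to showing $M$ is finitely presented, which you do not establish and which is not obviously implied by the hypotheses; the paper instead takes a presentation $0 \to R \to P \to N \to 0$ of $N$, forms pullbacks $E_S$ along $N_S = P/\langle S\rangle \to N$ for finite $S \subseteq R$, and uses irreducibility together with the same Mittag-Leffler argument to force $R$ to be finitely generated. For $(2)$, knowing $L$ is finitely co-generated and $C$ finitely generated does not yield $C$ finitely presented; the paper's mechanism is to use irreducibility of $M \to N$ on the pushout sequences to split $R_S$ off as a direct summand of $N_S$. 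For $(3)$ the decisive point is Azumaya's theorem: if some $p_i$ were a retraction, then $R_i$, an infinite direct sum of indecomposable finitely generated projectives by Lemma \ref{LemmaProjSub}, would become a direct summand of the finite direct sum $L$, which is impossible. None of these mechanisms appears in your sketch, so each of the three refinements still needs an actual argument.
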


\begin{proof} We may assume that $f$ is an inclusion and $N = M/L$. For proving the main part of the statement, assume to the contrary that $N$ is not finitely generated.  Let $E = \{X_i \mid i \in I\}$ be the set of all modules $X_i$ such that $L$ is a submodule of $X_i$, $X_i$ is a submodule of $M$ and $X_i/L$ is finitely generated. Now, the set $E$ with inclusions form a directed system and $\lim\limits_{\rightarrow}X_i = M$. Similarly, $\lim\limits_{\rightarrow}X_i/L = M/L = N$. Note that every $X_i$ is a proper submodule of $M$ as, otherwise, $N$ would be finitely generated. For $i \in I$, since $f$ is irreducible, and the inclusion $X_i \to M$ is not an epimorphism, the inclusion $L \to X_i$ is a section and hence, the short exact sequence
$$\eta_i: \quad 0 \to L \to X_i \to X_i/L \to 0$$
splits. This gives an exact sequence $\Hom_\A(\eta_i, L)$ for each $i \in I$. For $i \in I$, since $X_i/L$ is finitely generated and $L \in \mmod \A$, by Lemma \ref{LemmaEasy}, the space $\Hom_\A(X_i/L, L)$ is finite dimensional. By the Mittag-Leffler condition on inverse limits, we get an exact sequence
$$0 \to \lim\limits_{\longleftarrow} \Hom_\A(X_i/L, L) \to  \lim\limits_{\longleftarrow} \Hom_\A(X_i, L)  \to \lim\limits_{\longleftarrow} \Hom_\A(L, L) \to 0$$
which can be rewritten as
$$0 \to \Hom_\A(\lim\limits_{\rightarrow}X_i/L, L) \to  \Hom_\A(\lim\limits_{\rightarrow}X_i, L) \to \Hom_\A(L, L) \to 0$$
and hence as
$$0 \to \Hom_\A(N, L) \to  \Hom_\A(M, L)  \to \Hom_\A(L, L) \to 0.$$
Thus, there exists $g \in \Hom_\A(M, L)$ such that $1_L = \Hom_\A(f,L)(g) = gf$, meaning that $f$ is a section, a contradiction. This proves the main part of the proposition.

For proving (1) and (2), assume to the contrary that $N$ is not finitely presented (but is finitely generated). There exists a short exact sequence
$$0 \to R \to P \to N \to 0$$
where $P$ is projective finitely generated and $R$ is not finitely generated. For any finite set $S$ of elements of $R$, consider the quotient $R_S:=R/\langle S \rangle$. The set of all such modules with projections $R_{S'} \to R_{S}$ for $S'$ a subset of $S$ form a directed set.
We have $\lim\limits_{\longrightarrow} R_S = 0$. Set $N_S$ the cokernel of the inclusion $\langle S \rangle \to P$. We have a short exact sequence
$$0 \to R_S \to N_S \to N \to 0$$
Moreover, $\lim\limits_{\longrightarrow} N_S = N$. Now, we have a pullback diagram
$$\xymatrix{0 \ar[r] & L \ar[r]^{s_S} \ar@{=}[d] & E_S \ar[r]\ar[d]^{p_S} & N_S \ar[r] \ar[d] & 0 \\ 0 \ar[r] & L \ar[r]^f & M \ar[r]^g & N \ar[r] & 0}$$
Since $f$ is irreducible, for any $S$, either $s_S$ is a section or $p_S$ is a retraction. Assume that $p_S$ is a retraction for sets $S$ of arbitrarily large cardinality. For each such $S$, we get an exact sequence
$$(*): \quad 0 \to L \to M \oplus R_S \to N_S \to 0$$
In the situation of $(1)$, both $L, N_S$ are finitely generated, and hence $M \oplus R_S$ is finitely generated. Therefore, $R_S$ is finitely generated, which gives that $R$ is finitely generated, a contradiction. In the situation of $(2)$, since $\lim\limits_{\longrightarrow} N_S = N$, we may assume that the finite sets $S$ are large enough so that in the pushout diagram
$$\xymatrix{0 \ar[r] & L \ar[r]^f \ar[d] & M \ar[r]^g \ar[d] & N \ar[r] \ar@{=}[d] & 0 \\ 0 \ar[r] & R_S \ar[r] & N_S \ar[r] & N \ar[r] & 0}$$
corresponding to $(*)$, the morphism $M \to N_S$ is not a section. Hence, since $M \to N$ is irreducible, the morphism $N_S \to N$ is a retraction. Thus, $N_S = N \oplus R_S$ which makes $R_S$ finitely generated. As above, this gives that $R$ is finitely generated, a contradiction.
Therefore, in the situations of $(1)$ and $(2)$, we may assume that there exists a positive integer $r$ such that $s_S$ is a section for all $S$ with $|S| \ge r$. For $S$ with $|S| \ge r$, since $N_S$ is finitely generated, the space $\Hom(N_S,L)$ is finite dimensional by Lemma \ref{LemmaEasy}. Since each
$$\eta_S: \quad 0 \to L \to E_S \to N_S \to 0$$ splits, we get an exact sequence
$$\Hom(\eta_S, L): \quad 0 \to \Hom(N_S,L) \to \Hom(E_S,L) \to \Hom(L,L) \to 0$$
whenever $|S| \ge r$. Taking the inverse limit, and using the Mittag-Leffler condition on inverse limits, we get an exact sequence
$$0 \to \lim\limits_{\longleftarrow}\Hom(N_S,L) \to \lim\limits_{\longleftarrow}\Hom(E_S,L) \to \Hom(L,L) \to 0$$
which is the same as
$$0 \to \Hom(\lim\limits_{\longrightarrow} N_S,L) \to \Hom(\lim\limits_{\longrightarrow}E_S,L) \to \Hom(L,L) \to 0$$
and thus the same as
$$0 \to \Hom(N,L) \to \Hom(M,L) \to \Hom(L,L) \to 0$$
showing that $f$ is a section, a contradiction.

For proving (3), assume to the contrary that $N$ is not finitely presented, that $L$ is a finite direct sum of indecomposable modules and that $\A = k\Gamma$ for a strongly locally finite quiver $\Gamma$. Using the above notation and Lemma \ref{LemmaProjSub}, the module $R$ is an infinite direct sum of indecomposable finitely generated projective modules, and we have an infinite sequence of projections of non-finitely generated projective modules
$$R \to R_1 \to R_2 \to \cdots$$
whose direct limit is zero. We define $W_i = R/R_i$, which is projective finitely generated such that $R = R_i \oplus W_i$. We get morphisms $s_i: L \to E_i$ and $p_i: E_i \to M$. If $p_i$ is a retraction for some $i$, then we have a short exact sequence
$$0 \to L \to M\oplus R_i \to N_i \to 0$$
Since $M \oplus R_i \to N_i$ is an epimorphism and the restriction $R_i \to N_i$ is a radical map, the restriction $M \to N_i$ is an epimorphism. Hence, the co-restriction $L \to R_i$ is an epimorphism. Since $R_i$ is projective, $R_i$ is a direct summand of $L$. Now, $L$ is a finite direct sum of modules with local endomorphism algebras. By a theorem of Azumaya (see \cite{Az}), $R_i$ has to be a finite direct sum of indecomposable modules, a contradiction. Hence, we may assume that all $s_i$ are sections and we proceed as in the proof of (1) and (2).
\end{proof}

Note that given an irreducible monomorphism $L \to M$ in $\mmod \A$, it is not true, in general, that the cokernel is finitely presented, as shown in the next example.

\begin{Exam} Consider the infinite quiver $Q$ given by
$$\xymatrix{y_1 & y_2 \ar[l]_{\beta_1} & y_3 \ar[l]_{\beta_2} & y_4 \ar[l]_{\beta_3} & \cdots\\ && \cdots \\ x \ar[uu]^(0.7){\alpha_1} \ar[uur]^(0.66){\alpha_2} \ar[uurr]^(0.63){\alpha_3} \ar[uurrr]^(0.65){\alpha_4} }$$
and consider the ideal $I$ of the path category $kQ$ generated by the relations $\alpha_i-\beta_i\alpha_{i+1}$. Take $\A = kQ/I$. Note that $Q$ is not the Gabriel quiver of $\A$ since for all $i \ge 1$, there is no irreducible map from $x$ to $y_i$. It is not hard to check that $\Mod \A$ is hereditary. Indeed, $M \in \Mod \A$ is projective if and only if $M(\beta_i), M(\alpha_i)$ are injective maps for all $i$. This yields that submodules of projective modules are projective, so $\Mod \A$ is hereditary. Consider the projective module $P_x:=\Hom_\A(x,-)$ which is clearly locally finite dimensional. Consider the unique maximal submodule $M$ of $P_x$ generated by all $P_x(y_i)$. Consider the cokernel $f: P_x \to S_x$ of this inclusion, where $S_x$ is the simple module at $x$. We claim that the inclusion $M \to P_x$ is irreducible. It follows from Proposition 2.7 in \cite{AR_IV} that this inclusion is irreducible if and only if, for any morphism $g:Z \to S_x$, either $g$ factors through $f$ or $f$ factors through $g$. If $g$ is nonzero, then $g$ is an epimorphism and since $P_x$ is projective, $f$ factors through $g$. If $g=0$ then clearly, $g$ factors through $f$. Hence $M \to P_x$ is irreducible. Note that the cokernel of this irreducible map is finitely generated but not finitely presented. Also, Lemma \ref{LemmaProjSub} fails in this setting.
\end{Exam}

Using the duality $\mathcal{D}: \mmod \A \to \mmod \A^{\rm op}$, we get the dual of Proposition \ref{PropMain}.

\begin{Prop} \label{PropMainDual}
Let $f: M \to N$ be an irreducible epimorphism in $\mmod \A$. Then the kernel ${\rm ker}(f)$ of $f$ is finitely co-generated.
\begin{enumerate}[$(1)$]
    \item If $N$ is finitely co-generated, then ${\rm ker}(f)$ is finitely co-presented.
\item If ${\rm ker}(f) \to M$ is also irreducible, then ${\rm ker}(f)$ is finitely co-presented.
    \item If $\A$ is hereditary and given by a strongly locally finite quiver with $N$ a finite direct sum of indecomposable modules, then ${\rm ker}(f)$ is finitely co-presented.
\end{enumerate}
\end{Prop}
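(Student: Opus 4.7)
The plan is to deduce everything from Proposition \ref{PropMain} by transporting the situation via the duality $\mathcal{D}: \mmod \A \to \mmod \A^{\rm op}$ introduced at the start of Section~2. Given an irreducible epimorphism $f: M \to N$ in $\mmod \A$, I would first verify that $\mathcal{D}(f): \mathcal{D}(N) \to \mathcal{D}(M)$ is an irreducible monomorphism in $\mmod \A^{\rm op}$. Exactness of $\mathcal{D}$ together with the fact that $D = \Hom_k(-,k)$ turns kernels into cokernels yields a canonical isomorphism ${\rm coker}(\mathcal{D}(f)) \cong \mathcal{D}({\rm ker}(f))$. Irreducibility is preserved because $\mathcal{D}$ is an equivalence of categories onto $(\mmod \A^{\rm op})^{\rm op}$, so it respects the radical and the property of being neither a section nor a retraction.

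Once this dictionary is in place, I would invoke the main part of Proposition \ref{PropMain} in $\mmod \A^{\rm op}$: it gives that $\mathcal{D}({\rm ker}(f)) = {\rm coker}(\mathcal{D}(f))$ is finitely generated in $\mmod \A^{\rm op}$. By the observation recorded in Section~2 that $X$ is finitely generated in $\mmod \A^{\rm op}$ if and only if $\mathcal{D}(X)$ is finitely co-generated in $\mmod \A$ (applied with $X = \mathcal{D}({\rm ker}(f))$ and using $\mathcal{D}\circ \mathcal{D} \cong \mathrm{id}$), one concludes that ${\rm ker}(f)$ is finitely co-generated.

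For part (1), if $N$ is finitely co-generated in $\mmod \A$, then $\mathcal{D}(N)$ is finitely generated in $\mmod \A^{\rm op}$, so Proposition \ref{PropMain}(1) applied to $\mathcal{D}(f)$ yields that ${\rm coker}(\mathcal{D}(f)) = \mathcal{D}({\rm ker}(f))$ is finitely presented in $\mmod \A^{\rm op}$; equivalently, ${\rm ker}(f)$ is finitely co-presented. For part (2), if ${\rm ker}(f) \to M$ is irreducible, then its image under $\mathcal{D}$ is an irreducible morphism $\mathcal{D}(M) \to \mathcal{D}({\rm ker}(f)) = {\rm coker}(\mathcal{D}(f))$, placing us exactly in the hypothesis of Proposition \ref{PropMain}(2) and giving the same conclusion.

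For part (3), the only extra verification is that the hypothesis transfers to $\A^{\rm op}$: if $\A \cong kQ$ with $Q$ strongly locally finite, then $\A^{\rm op} \cong kQ^{\rm op}$ with $Q^{\rm op}$ also strongly locally finite, and $\mmod \A^{\rm op}$ remains hereditary since heredity is self-dual. Moreover, if $N$ is a finite direct sum of indecomposables in $\mmod \A$, then $\mathcal{D}(N)$ is a finite direct sum of indecomposables in $\mmod \A^{\rm op}$, because $\mathcal{D}$ is a duality and hence preserves indecomposability and direct sum decompositions. Proposition \ref{PropMain}(3) applied to $\mathcal{D}(f)$ then gives that $\mathcal{D}({\rm ker}(f))$ is finitely presented, and dualising once more finishes the proof. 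No genuine obstacle arises; the only care needed is the bookkeeping that ensures ``irreducible'', ``kernel'', ``finitely (co-)generated'' and ``hereditary strongly locally finite'' all exchange correctly under $\mathcal{D}$.
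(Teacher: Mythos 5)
Your proposal is correct and is exactly the paper's approach: the paper gives no separate proof, stating only that the result follows from Proposition \ref{PropMain} via the duality $\mathcal{D}: \mmod \A \to \mmod \A^{\rm op}$. Your write-up merely makes explicit the routine verifications (kernels become cokernels, irreducibility and the hypotheses of (1)--(3) transfer under $\mathcal{D}$) that the paper leaves implicit.
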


\section{almost split sequences}

The following nice fact follows directly from propositions \ref{PropMain} and \ref{PropMainDual}. This partially answers a conjecture of Krause in \cite{Kr} and generalizes the results in \cite{Pa}.

\begin{Theo} \label{Theorem1}Let $0 \to L \to M \to N \to 0$ be an almost split sequence in $\mmod \A$. Then $L$ is finitely co-presented and $N$ is finitely presented.
\end{Theo}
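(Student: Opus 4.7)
The plan is to observe that this theorem is an immediate consequence of Propositions \ref{PropMain} and \ref{PropMainDual} applied with their part (2) hypotheses, so the work is essentially to justify why those hypotheses are met by the outer maps of an almost split sequence.

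First I would recall the standard fact that in any almost split sequence
$$0 \to L \xrightarrow{f} M \xrightarrow{g} N \to 0,$$
both $f$ and $g$ are irreducible morphisms in $\mmod \A$. Indeed, $f$ is a monomorphism that is not a section (otherwise the sequence would split), and if $f = f_2 f_1$ with $f_1 : L \to X$ not a section, then since $f_1$ is not a section and $L \to M$ is left almost split, $f_1$ factors as $f_1 = h f$ for some $h: M \to X$; this forces $f_2 h : M \to M$ to satisfy $f_2 h f = f$, and by left minimality of $f$ one gets that $f_2 h$ is an automorphism, so $f_2$ is a retraction. Hence $f$ is irreducible. The dual argument using right minimality and the right almost split property of $g$ shows that $g$ is irreducible.

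Next I would apply Proposition \ref{PropMain} to the irreducible monomorphism $f : L \to M$, whose cokernel is precisely $N$. The canonical projection $M \to \coker(f)$ is exactly $g$, which by the preceding step is also irreducible. Therefore the hypothesis of Proposition \ref{PropMain}(2) is satisfied, and we conclude that $N = \coker(f)$ is finitely presented. Dually, applying Proposition \ref{PropMainDual} to the irreducible epimorphism $g : M \to N$, whose kernel is $L$, the inclusion $\ker(g) \to M$ is $f$, which is irreducible; so Proposition \ref{PropMainDual}(2) yields that $L = \ker(g)$ is finitely co-presented.

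There is no real obstacle: once the irreducibility of both $f$ and $g$ is in hand, the two propositions slot together in perfect duality to give both conclusions of the theorem. The only part that merits a careful sentence or two is the verification that the outer maps of an almost split sequence are irreducible, and this is entirely standard and uses only the minimality and almost-split properties built into the definition. The result closes the gap with Auslander's existence theorem cited immediately after the statement.
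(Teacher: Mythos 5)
Your proposal is correct and follows essentially the same route as the paper: both outer maps of an almost split sequence are irreducible, so Proposition \ref{PropMain}(2) applied to $f$ gives that $N=\mathrm{coker}(f)$ is finitely presented, and Proposition \ref{PropMainDual}(2) applied to $g$ gives that $L=\ker(g)$ is finitely co-presented. The only difference is that you spell out the standard verification that the outer maps are irreducible, which the paper takes for granted.
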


\begin{proof} Since the sequence is almost split, the two morphisms $L \to M$ and $M \to N$ are irreducible. It follows from Proposition \ref{PropMain} that $N$ is finitely presented and by Proposition \ref{PropMainDual} that $L$ is finitely co-presented.
\end{proof}

Hence, using an existence theorem of Auslander in \cite{Aus_Survey} and its dual version, we get the following consequence.

\begin{Cor} Let $M$ be indecomposable in $\mmod \A$.
\begin{enumerate}[$(1)$]
    \item If $M$ is not projective, then there is an almost split sequence ending in $M$ in $\mmod \A$ if and only if $M$ is finitely presented.
    \item If $M$ is not injective, then there is an almost split sequence starting in $M$ in $\mmod \A$ if and only if $M$ is finitely co-presented.
\end{enumerate}
\end{Cor}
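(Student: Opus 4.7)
The plan is to combine Theorem~\ref{Theorem1} with the existence theorem of Auslander cited earlier in the paper. Both parts of the corollary are biconditionals, and one direction is essentially immediate from what is already in place, while the other is a citation.

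For part $(1)$, the ``only if'' direction is the forward application of Theorem~\ref{Theorem1}: if an almost split sequence $0\to L\to E\to M\to 0$ exists in $\mmod\A$, then $M$ is finitely presented. The ``if'' direction is the content of Auslander's existence theorem in \cite{Aus_Survey}, which asserts that every indecomposable finitely presented non-projective module in $\mmod\A$ is the right term of an almost split sequence in $\mmod\A$. So for $(1)$ the proof is a one-line invocation of these two results.

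For part $(2)$, I would deduce the statement from part $(1)$ by applying the duality $\mathcal{D}:\mmod\A\to\mmod\A^{\rm op}$ introduced in Section~2. Under $\mathcal{D}$, an almost split sequence starting in $M$ becomes an almost split sequence ending in $\mathcal{D}(M)$, injectivity of $M$ corresponds to projectivity of $\mathcal{D}(M)$, and finitely co-presented modules correspond to finitely presented modules in $\mmod\A^{\rm op}$ (as noted in Section~2). Applying part~$(1)$ to $\mathcal{D}(M)\in\mmod\A^{\rm op}$ gives the desired equivalence for $M$. Alternatively, one can argue symmetrically using Proposition~\ref{PropMainDual} together with the dual form of Auslander's existence theorem.

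There is essentially no obstacle here: the corollary is genuinely a formal consequence of Theorem~\ref{Theorem1} on one side and Auslander's theorem on the other, with the duality handling the passage from (1) to (2). The only thing to be slightly careful about is confirming that Auslander's existence result as stated in \cite{Aus_Survey} applies to $\mmod\A$ for a general Hom-finite Krull-Schmidt spectroid $\A$ (not just to module categories over rings); this is precisely the setting considered by Auslander, so no additional argument is needed.
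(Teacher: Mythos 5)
Your proposal is correct and matches the paper exactly: the paper gives no separate proof of this corollary, presenting it as an immediate consequence of Theorem~\ref{Theorem1} combined with Auslander's existence theorem from \cite{Aus_Survey} and its dual version. Your additional remark that part~$(2)$ can be obtained from part~$(1)$ via the duality $\mathcal{D}$ is a harmless elaboration of the same argument.
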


\section{The category $\rep(Q)$}

Assume now that $\A$ is hereditary and given by a strongly locally finite quiver $Q = (Q_0, Q_1)$. The category $\mmod \A$ with $\A = kQ$ is the category of locally finite dimensional representations of $Q$. It will be denoted by $\rep(Q)$, which is a more convenient notation. We let $\rrep(Q)$ denote the full subcategory of $\rep(Q)$ of those representations $M$ that are the middle term of a short exact sequence
$$0 \to L \to M \to N \to 0$$
where $L$ is finitely presented, $N$ is finitely co-presented and there are finitely many arrows $\alpha$ such that $M(\alpha) \ne 0$ but $(N \oplus L)(\alpha)=0$. Surprisingly, this category is abelian (but not extension closed). As shown in \cite{Pa_rrep}, most of the Auslander-Reiten theory of $\rep(Q)$ lies in $\rrep(Q)$, in the sense that the almost split sequences in $\rep(Q)$ all lie in $\rrep(Q)$. Moreover, any irreducible morphism between indecomposable representations of $\rep(Q)$ with one term in $\rrep(Q)$ has to lie in $\rrep(Q)$.

\medskip

In general, there might be indecomposable objects in $\rep(Q)$ which are not in $\rrep(Q)$. We will show later that, for $Q$ connected, this happens if and only if $Q$ is not a star quiver.

\begin{Defn}. The quiver $Q$ is called a \emph{star quiver} provided that there exists a finite full and convex subquiver $\Gamma$ of $Q$ and two finite disjoint sets of vertices $S(\to), (\to)S$ of $\Gamma$ such that $Q$ is obtained from $\Gamma$ by adding vertices ${v_{s,i}}$ for $s \in S(\to), (\to)S$ and $i \ge 1$ with the following arrows. If $s \in S(\to)$, the arrows $\alpha_{s,0}: s \to v_{s,1}$ and $\alpha_{s,i}: v_{s, i} \to v_{s, i+1}$. If $s \in (\to)S$, the arrows $\alpha_{s,0}: v_{s,1} \to s$ and $\alpha_{s,i}: v_{s, i+1} \to v_{s, i}$.
\end{Defn}

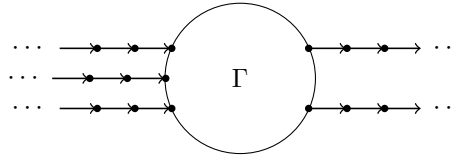
\begin{figure}[h]
  \centering
  \begin{tikzpicture}[xscale=2,yscale=2]

    \draw (0,0) circle (0.5cm);

\fill (-0.495,0) circle (0.7pt);
\fill (-0.455,0.2) circle (0.7pt);
\fill (-0.455,-0.2) circle (0.7pt);

\fill (0.455,0.2) circle (0.7pt);
\fill (0.455,-0.2) circle (0.7pt);

\node at (0,0) {$\Gamma$};

\draw [semithick,->] (0.455,-0.2) -- (0.7,-0.2);
\fill (0.71,-0.2) circle (0.7pt);
\draw [semithick,->] (0.71,-0.2) -- (0.95,-0.2);
\fill (0.96,-0.2) circle (0.7pt);
\draw [semithick,->] (0.96,-0.2) -- (1.2,-0.2);
\node at (1.4,-0.2) {$\cdots$};

\draw [semithick,->] (0.455,0.2) -- (0.7,0.2);
\fill (0.71,0.2) circle (0.7pt);
\draw [semithick,->] (0.71,0.2) -- (0.95,0.2);
\fill (0.96,0.2) circle (0.7pt);
\draw [semithick,->] (0.96,0.2) -- (1.2,0.2);
\node at (1.4,0.2) {$\cdots$};

\draw [semithick,->] (-0.7, 0.2) -- (-0.469,0.2);
\fill (-0.70,0.2) circle (0.7pt);
\draw [semithick,->] (-0.95,0.2) -- (-0.71, 0.2);
\fill (-0.95,0.2) circle (0.7pt);
\draw [semithick,->] (-1.2,0.2) -- (-0.96,0.2);
\node at (-1.4,0.2) {$\cdots$};

\draw [semithick,->] (-0.7, -0.2) -- (-0.469,-0.2);
\fill (-0.70,-0.2) circle (0.7pt);
\draw [semithick,->] (-0.95,-0.2) -- (-0.71, -0.2);
\fill (-0.95,-0.2) circle (0.7pt);
\draw [semithick,->] (-1.2,-0.2) -- (-0.96,-0.2);
\node at (-1.4,-0.2) {$\cdots$};

\draw [semithick,->] (-0.75, 0) -- (-0.505,0);
\fill (-0.75,0) circle (0.7pt);
\draw [semithick,->] (-1,0) -- (-0.76, 0);
\fill (-1,0) circle (0.7pt);
\draw [semithick,->] (-1.25,0) -- (-1.01,0);
\node at (-1.43,0) {$\cdots$};

  \end{tikzpicture}
\caption{A star quiver}
\label{fig:starquiver}
\end{figure}

In order to study the representation theory of star quivers, we need to introduce more terminology. Let $\alpha: i \to j$ be an arrow of $Q$. The \emph{tail} of $\alpha$, denoted $t(\alpha)$, is the vertex $i$ and its \emph{head}, denoted $h(\alpha)$, is the vertex $j$. We denote by $\alpha^{-1}$ a formal inverse of $\alpha$.  We define the tail $t(\alpha^{-1})$ of $\alpha^{-1}$ to be $j$ and its head $h(\alpha^{-1})$ to be $i$. Now, a \emph{walk} $w$ in $Q$ is a formal expression $w = \cdots \alpha_3 \alpha_2 \alpha_1$, which may be infinite, such that each $\alpha_i$ is an arrow or the inverse of an arrow with $h(\alpha_i) = t(\alpha_{i+1})$ for all $i$. An infinite walk is said to be \emph{simple} if all of the $t(\alpha_i)$ are distinct. If $w = \alpha_r \cdots \alpha_2 \alpha_1$ is finite, it is called \emph{simple} if all of the $t(\alpha_1), \ldots, t(\alpha_r), h(\alpha_r)$ are distinct. Note that a simple walk cannot contain both an arrow and its inverse, hence has to be reduced. Given a simple walk $w$, one attaches to $w$ a representation $M(w)$ as follows. For $x \in Q_0$, $M(x)=k$ if $x$ is a vertex of $w$ and $M(x)=0$, otherwise. For $\alpha \in Q_1$, $M(\alpha)=1$ if $\alpha$ or $\alpha^{-1}$ occurs in $w$ and $M(\alpha)=0$, otherwise. It is not hard to check that $M(w)$ is indecomposable and $\End(M(w))=k$. An \emph{infinite sourced path} in $Q$ is an infinite path with a starting vertex and an \emph{infinite sinked path} in $Q$ is an infinite path with an ending vertex. Therefore, since $Q$ has no oriented cycles, we can identify infinite sourced paths with infinite simple walks with all edges being arrows. Similarly, we can identify infinite sinked paths with infinite simple walks with all edges being inverses of arrows.
The following lemma is essential. It appears in \cite{Pa_rrep}.

\begin{Lemma} \label{Lemmarrep}Let $M \in \rrep(Q)$. Then there exists a subrepresentation $L$ of $M$ which is finitely generated projective such that $M/L$ is finitely co-presented.
\end{Lemma}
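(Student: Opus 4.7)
The plan is to exploit the defining short exact sequence of $M \in \rrep(Q)$, namely $0 \to L' \to M \to N \to 0$ with $L'$ finitely presented and $N$ finitely co-presented, and to produce $L$ as a finitely generated projective submodule of $L'$ (hence of $M$) whose quotient in $L'$ is finite dimensional, in particular finitely co-presented. This reduction is sound, because then $M/L$ fits in a short exact sequence
$$0 \to L'/L \to M/L \to N \to 0$$
with both ends finitely co-presented, and an extension of fcp modules by fcp modules is fcp: embed $L'/L \hookrightarrow J$ and $N \hookrightarrow I$ into finite direct sums of indecomposable injectives; use injectivity of $J$ to extend the first embedding to a map $M/L \to J$; combine with $M/L \twoheadrightarrow N \hookrightarrow I$ to obtain a monomorphism $M/L \hookrightarrow J \oplus I$; a $3 \times 3$ snake-lemma diagram using the injective co-resolutions of $L'/L$ and $N$ then shows that the cokernel of this monomorphism is again finitely co-generated, giving $M/L$ finitely co-presented.

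To construct $L \subseteq L'$ finitely generated projective with $L'/L$ finite dimensional, I would begin from a minimal projective presentation $0 \to K \to P_0 \to L' \to 0$ in which $P_0$ and $K$ are both finitely generated projective. Such a presentation exists because $\A = kQ$ is hereditary and, by Lemma \ref{LemmaProjSub}, a finitely generated submodule of a fgp module is fgp. Writing $P_0 = \bigoplus_{i=1}^n P_{x_i}$ and $K = \bigoplus_{j=1}^m P_{y_j}$, I would choose a finite antichain $\Lambda$ of vertices of $Q$ among the successors of the $x_i$, lying in branches that avoid the $y_j$, together with an embedding $L := \bigoplus_{z \in \Lambda} P_z \hookrightarrow P_0$ via chosen paths from the $x_i$ down to the $z \in \Lambda$, so that two things hold: first, $L \cap K = 0$ inside $P_0$, so that $L$ maps monomorphically onto a fgp submodule of $L' = P_0/K$; and second, $P_0/(L+K)$ has finite support, hence is finite dimensional. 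Local finiteness of $Q$ ensures that each radical layer of $P_0$ is finite, which makes it possible to cover the infinite tails of $L'$ by a finite antichain.

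The main obstacle is the combinatorial step of producing such a $\Lambda$: verifying that antichains can be chosen so that $L \cap K = 0$ while still making $P_0/(L+K)$ finite dimensional. The difficulty is that $K$'s support can extend arbitrarily deep into $P_0$ along each branch starting at a $y_j$, so simply ``going deep'' does not immediately yield $L$ disjoint from $K$. The crucial input is that $K$ has only finitely many indecomposable summands and that $Q$ is interval finite, so between any pair of vertices there are only finitely many paths; this reduces the constraint $L \cap K = 0$ to a finite system of linear conditions on the chosen path-representatives for each $P_z \hookrightarrow P_0$, which can be arranged by pushing $\Lambda$ into branches of $Q$ that bypass all the $y_j$, or by passing to sufficient depth together with the finitely many path-constraints to force triviality of intersection. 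Handling this case analysis, perhaps by induction on the number of indecomposable summands of $K$, is the crux of the proof.
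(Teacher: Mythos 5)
The paper offers no proof of this lemma --- it is imported from \cite{Pa_rrep} --- so there is no in-text argument to compare against; your proposal has to stand on its own. Its first half does: starting from the defining sequence $0 \to L' \to M \to N \to 0$ with $L'$ finitely presented and $N$ finitely co-presented, it is enough to produce a finitely generated projective $L \subseteq L'$ with $L'/L$ finite dimensional, since finite-dimensional representations of a locally finite quiver are finitely co-presented and the finitely co-presented representations are closed under extensions (the paper itself invokes this via \cite{Aus}). Likewise, writing $0 \to K \to P_0 \to L' \to 0$ with $K$ a finite direct sum of finitely generated projectives is justified by heredity and Lemma \ref{LemmaProjSub}.

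The genuine gap is the step you yourself call the crux: the existence of a finitely generated projective $L \subseteq L'$ with $\dim_k L'/L < \infty$ is asserted, not proved, and the sketch does not make it plausible that your two strategies (branches bypassing the $y_j$, or passing to sufficient depth) always succeed. The requirement is that finitely many chosen elements $u_z \in P_0(z)$, $z \in \Lambda$, generate a submodule that meets $K$ trivially \emph{and}, together with $K$, exhausts $P_0(a)$ at all but finitely many vertices $a$; since $\dim_k P_0(a)$ is unbounded in general (already for the ladder quiver $x_0 \to x_1 \to \cdots$, $y_0 \to y_1 \to \cdots$, $x_i \to y_i$, one has $\dim P_{x_0}(y_l) = l+1$), this is an infinite family of exact dimension matches to be achieved by finitely many choices, and neither the antichain condition, nor the restriction to single-path embeddings, nor the proposed induction on the summands of $K$ is shown to deliver it. What closes the gap is the structural fact underlying \cite{BLP} and \cite{Pa_rrep}: if $n$ exceeds the lengths of the finitely many paths occurring in the generators of $K$, then ${\rm rad}^n L'$ is finitely generated projective, while $L'/{\rm rad}^n L'$ is finite dimensional because $Q$ is locally finite; taking $L = {\rm rad}^n L'$ then completes your argument. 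As written, the proposal is a sound reduction resting on an unproved combinatorial claim.
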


\begin{Lemma} \label{LemmaDegree}Let $p$ be an infinite sourced or sinked path in $Q$ such that there are infinitely many vertices of $p$ which are of degree at least three in $Q$. Then there is an indecomposable representation $E$ in $\rep(Q)$ which is not in $\rrep(Q)$.
\end{Lemma}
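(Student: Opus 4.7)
By the duality $\mathcal{D}\colon \rep(Q) \to \rep(Q^{\rm op})$ we may assume $p$ is sourced; write $p\colon x_0 \xrightarrow{\alpha_0} x_1 \xrightarrow{\alpha_1} \cdots$, and let $S$ be the infinite set of indices $i$ with $\deg_Q(x_i) \ge 3$. We split into two cases. In \emph{Case} $(a)$, infinitely many $i \in S$ admit an extra outgoing arrow $x_i \to y$ with $y \notin p$; set $E := M(p)$, the thin representation supported on $p$ with $E(\alpha_j) = 1$ for every $j$ and $E = 0$ elsewhere. In \emph{Case} $(b)$, otherwise, there exists $N_0$ such that every $i \in S$ with $i \ge N_0$ carries only incoming extra arrows at $x_i$; pick one such extra $\beta_i\colon y_i \to x_i$ for each such $i$, and define $E$ by $E(x_j) = k$ for every $j \ge 0$, $E(y_i) = k$ for every chosen $i$, $E(\alpha_j) = 1$, $E(\beta_i) = 1$, and $E = 0$ on every other vertex and arrow of $Q$.

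A direct computation with the identity transition maps of $E$ shows $\End(E) = k$, so $E$ is indecomposable. Moreover, at every vertex on which $E$ is supported one outgoing map of $E$ is the identity (namely $E(\alpha_j)$ at $x_j$, and $E(\beta_i)$ at $y_i$ in Case~$(b)$), whence ${\rm soc}(E) = 0$. Any finite sum $\bigoplus_{s=1}^r I_{b_s}$ of indecomposable injectives is the injective envelope of its semisimple socle, so every nonzero submodule of such a sum has nonzero socle. It follows that a nonzero module whose socle is either zero or infinite-dimensional cannot be finitely co-generated, and in particular cannot be finitely co-presented. By Lemma~\ref{Lemmarrep}, to conclude $E \notin \rrep(Q)$ it now suffices to show that $E/L$ has zero or infinite-dimensional socle for every finitely generated projective submodule $L \subseteq E$.

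By Lemma~\ref{LemmaProjSub}, any such $L$ decomposes as $L = \bigoplus_s P_{v_s}$ with finitely many indecomposable finitely generated projective summands; since $\dim L(w) \le \dim E(w) \le 1$ for every $w$, the supports in $Q$ of the summands are pairwise disjoint, so $L$ has at most one summand $P_v$. In Case~$(a)$, any candidate $P_{x_j}$ would be nonzero at some extra outgoing vertex $y \notin p$ while $E(y) = 0$, so the only option is $L = 0$ and $E/L = E$ has zero socle. In Case~$(b)$, the only embeddable candidates besides $L = 0$ are $L = P_{x_j}$ for $j \ge N_0$ and $L = P_{y_i}$ for certain chosen $i$; in either nonzero instance the support of $L$ contains $\{x_j \colon j \ge j_L\}$ for some threshold $j_L$. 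The quotient $E/L$ then vanishes on every $x_j$ with $j \ge j_L$, so for each chosen $y_{i'}$ with $i' \ge j_L$ one gets $(E/L)(\beta_{i'}) = 0$ and the simple $S_{y_{i'}}$ sits inside ${\rm soc}(E/L)$; infinitely many such $y_{i'}$ remain, making ${\rm soc}(E/L)$ infinite-dimensional. Combining both cases, Lemma~\ref{Lemmarrep} yields $E \notin \rrep(Q)$. The main obstacle is precisely the classification of finitely generated projective subrepresentations of $E$ in this last paragraph: the joint use of Lemma~\ref{LemmaProjSub} and the thinness of $E$ is what forces $L$ to have at most one summand and rules out a hidden projective $L$ whose quotient $E/L$ could happen to be finitely co-presented.
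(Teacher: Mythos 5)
Your overall strategy is the same as the paper's: take $E=M(p)$ when the branching is outgoing, and a module with extra one--dimensional spaces at sources of incoming arrows otherwise, then combine Lemma~\ref{Lemmarrep} with a ``not finitely co-generated'' argument (your explicit socle criterion is a nice way to make that last step precise). However, there is a genuine gap in your case division. A vertex $x_i$ of degree at least three may owe its extra arrow to a \emph{chord} of $p$, i.e.\ an arrow between two vertices of $p$ (a doubled arrow $x_i\to x_{i+1}$, or an arrow $x_{2i}\to x_{2i+2}$, say); such configurations occur in strongly locally finite quivers, since the number of paths between two fixed vertices stays finite. If all but finitely many of the degree-three witnesses are explained only by chords, your Case~$(a)$ hypothesis fails (no extra outgoing arrow lands \emph{off} $p$), yet the assertion opening Case~$(b)$ --- that every $i\in S$ with $i\ge N_0$ carries only \emph{incoming} extra arrows --- is false, because the tails of the chords carry extra outgoing arrows; it simply does not follow from the negation of Case~$(a)$. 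Moreover, even where an incoming $\beta_i\colon y_i\to x_i$ exists, nothing in your setup forces $y_i\notin p$; if $y_{i'}=x_m$ with $m\ge j_L$ then $(E/L)(y_{i'})=0$ and $S_{y_{i'}}$ does \emph{not} sit in ${\rm soc}(E/L)$, so the count of socle summands collapses. Your classification of the embeddable projectives in Case~$(b)$ relies on the same unproved structure (it assumes the $x_j$ with $j\ge N_0$ have no extra outgoing arrows at all).

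The repair is essentially the dichotomy the paper uses: either infinitely many $x_i$ are the tail of at least two arrows, or infinitely many are the head of at least two arrows. In the first case $E=M(p)$ still works, but the reason no nonzero $P_{x_j}$ embeds must be split in two: either some $x_m$ with $m\ge j$ has an extra outgoing arrow to $y\notin p$, whence $P_{x_j}(y)\ne 0=E(y)$ (your argument), or it has a chord $x_m\to x_n$, whence $\dim P_{x_j}(x_n)\ge 2>\dim E(x_n)$ because there are two distinct paths from $x_j$ to $x_n$. In the second case, one first observes that only finitely many $p$-vertices can be tails of extra arrows (else we are in the first case), so by local finiteness only finitely many of the chosen incoming arrows can have source on $p$; discarding those finitely many indices, one may assume $y_i\notin p$ throughout, after which your Case~$(b)$ computation goes through. (A minor further slip: ``the supports of the summands are pairwise disjoint, so $L$ has at most one summand'' is a non sequitur, though harmless, since your argument only ever needs to exclude each indecomposable summand separately.)
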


\begin{proof} We will assume that $p$ is an infinite sourced path, since the proof of the other case is dual. Let $v_1, v_2, \ldots$ be an infinite list of distinct vertices of $p$ such that each $v_i$ is of degree at least $3$ in $Q$. Consider first the case where infinitely many of these $v_i$ are the starting vertex of at least two arrows. With no loss of generality, we may assume that all of the $v_i$ are the starting vertex of at least two arrows. Consider the indecomposable representation $M(p)$, which is clearly finitely generated. Clearly, any non-zero submodule of $M(p)$ is not projective. Therefore, it follows from Lemma \ref{Lemmarrep} that $M(p)$ does not lie in $\rrep(Q)$. Consider now the case where infinitely many of these $v_i$ are the ending vertex of at least two arrows. With no loss of generality, we may assume that all of the $v_i$ are the ending vertex of at least two arrows. For each $i$, let $\alpha_i: w_i \to v_i$ be an arrow which does not occur in $p$. There is an extension
$$0 \to M(p) \to E \to \bigoplus_{i \ge 1}S_{w_i} \to 0$$
where $E$ is indecomposable and $S_{w_i}$ is the simple representation at $w_i$. Since $Q$ is locally finite, $E$ lies in $\rep(Q)$. The module $E$ is not finitely generated since $\textstyle{\bigoplus}_{i \ge 1}S_{w_i}$ is not. If $P$ is any finitely generated subrepresentation of $E$, then $E/P$ is not finitely co-generated. Hence, it follows from Lemma \ref{Lemmarrep} that $E$ does not lie in $\rrep(Q)$.
\end{proof}

\begin{Lemma} \label{LemmaWalk}Let $w$ be an infinite simple walk such that for any factorization $w=w_2w_1$, the walk $w_2$ is not an infinite path. Then $M(w) \in \rep(Q)$ but not in $\rrep(Q)$.
\end{Lemma}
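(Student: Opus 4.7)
The plan is to argue by contradiction. By Lemma \ref{Lemmarrep}, $M(w) \in \rrep(Q)$ would provide a finitely generated projective submodule $L \subseteq M(w)$ whose quotient $M(w)/L$ is finitely co-presented. The strategy is first to show that such an $L$ must have \emph{finite} support in the vertices of $w$, so that $M(w)/L$ contains an infinite ``tail'' direct summand of the same type as $M(w)$, and then to use the duality $\mathcal{D}$ to argue that this tail cannot even be finitely co-generated.

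The key technical step is the following claim: every finitely generated submodule of $M(w)$ has finite support. It suffices to bound the support of the submodule $\langle\eta\rangle$ generated by a single nonzero element $\eta \in M(w)(v_k)$. This submodule is the image of the morphism $P_{v_k} \to M(w)$ sending $1_{v_k}$ to $\eta$; since $M(w)(\alpha) = 0$ for any arrow $\alpha$ not occurring in $w$ in the forward direction, $\langle\eta\rangle(v)$ is nonzero exactly when there is a path $v_k \to v$ in $Q$ all of whose arrows lie in $w$ in the forward direction. At any vertex $v_i$ the only outgoing $w$-arrows are the one to $v_{i+1}$ (if $\epsilon_{i+1}=+$) and the one to $v_{i-1}$ (if $\epsilon_i=-$), so tracing all possibilities shows the reachable set from $v_k$ extends rightward until the next local sink of $w$ (a vertex $v_i$ with $\epsilon_i=+,\,\epsilon_{i+1}=-$) and leftward until the previous local sink (or until $v_0$). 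The factorization hypothesis on $w$ is equivalent to $w$ having infinitely many direction changes, and consecutive direction changes alternate between local sinks and local sources; hence $w$ has infinitely many local sinks and the reachable set from $v_k$ lies in a finite sub-interval of the indices of $w$.

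With ${\rm supp}(L)$ finite, $M(w)/L$ is supported on a cofinite set $T$ of indices, and its only nonzero structure maps are identity maps along $w$-arrows joining consecutive vertices both in $T$. Thus $M(w)/L$ splits as a direct sum indexed by the maximal consecutive intervals of $T$, exactly one of which is an infinite tail $w_{\rm tail}$ lying past the last index in ${\rm supp}(L)$; the corresponding summand $M(w_{\rm tail})$ is indecomposable, and $w_{\rm tail}$ is itself an infinite simple walk with infinitely many direction changes. Since direct summands of a finitely co-generated module are again finitely co-generated (the inclusion into a finite sum of indecomposable injectives restricts to any summand), finite co-presentation of $M(w)/L$ would force $M(w_{\rm tail})$ to be finitely co-generated, so it suffices to rule this out.

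The last step is carried out by duality: $M(w_{\rm tail})$ is finitely co-generated in $\rep(Q)$ if and only if $\mathcal{D}(M(w_{\rm tail}))$ is finitely generated in $\rep(Q^{\rm op})$. But $\mathcal{D}(M(w_{\rm tail}))$ is the thin module on $Q^{\rm op}$ associated to $w_{\rm tail}$ viewed as a walk in $Q^{\rm op}$ (which simply flips every $\epsilon_i$), and this is still an infinite simple walk with infinitely many direction changes. Applying the key claim to the quiver $Q^{\rm op}$, every finitely generated submodule of $\mathcal{D}(M(w_{\rm tail}))$ has finite support, whereas the module itself has infinite support---a contradiction. The assertion $M(w) \in \rep(Q)$ is immediate, since $Q$ is strongly locally finite and $M(w)$ is one-dimensional on $w$ and zero elsewhere. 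The main obstacle, and the only place where the factorization hypothesis really enters, is the finiteness of the ``forward $w$-reachable set'' from a single vertex; this depends on having local sinks of $w$ on both sides of $v_k$, which is exactly what infinitely many direction changes guarantee via the alternation of local sinks and local sources.
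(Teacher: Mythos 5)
Your proof is correct and follows the paper's own argument: both assume $M(w)\in\rrep(Q)$, invoke Lemma \ref{Lemmarrep}, and derive a contradiction from the fact that finitely generated subrepresentations of $M(w)$ are finite dimensional together with the dual statement on the co-generated side. The paper leaves both observations as one-line remarks, while you supply the reachability argument (via the alternation of local sinks and sources forced by infinitely many direction changes) and make the ``similarly'' precise by splitting off the infinite tail summand of $M(w)/L$ and applying the duality $\mathcal{D}$; this is a faithful elaboration rather than a different route.
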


\begin{proof} Assume to the contrary that $M(w)$ lies in $\rrep(Q)$. Observe that if $L$ is a finitely generated subrepresentation of $M(w)$, then $L$ is finite dimensional. Similarly, if $N$ is a finitely co-generated quotient of  $M(w)/L$, then $N$ is finite dimensional. This contradicts Lemma \ref{Lemmarrep}.
\end{proof}

Given a finite simple walk $w = \alpha_r \cdots \alpha_2 \alpha_1$, we define the \emph{tail} of $w$ to be the tail of $\alpha_1$ and the \emph{head} of $w$ to be the head of $\alpha_r$.

\begin{Prop} \label{Proprrep}
Let $Q$ be connected and strongly locally finite. Then there is an indecomposable object in $\rep(Q)$ which is not in $\rrep(Q)$ if and only if $Q$ is not a star quiver.
\end{Prop}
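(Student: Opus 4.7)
The plan is to prove both implications separately, using Lemmas \ref{LemmaDegree} and \ref{LemmaWalk} for the forward direction and the explicit structure of a star quiver for the converse. Let $B \subseteq Q_0$ denote the set of vertices of $Q$ of degree at least three.

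For the direction asserting that if $Q$ is not a star quiver then $\rep(Q)\setminus\rrep(Q)$ contains an indecomposable, I split on whether $B$ is finite. If $B$ is finite, then by interval finiteness the convex hull of $B$ in $Q$ is a finite subquiver $\Gamma$, and each connected component of $Q\setminus\Gamma$ is path-like (every vertex having degree at most two in $Q$). If every infinite such component is monotone---a ray or a co-ray attached to $\Gamma$---then, after absorbing finitely many attaching pieces into $\Gamma$, $Q$ satisfies the star quiver definition. Otherwise, some infinite component yields an infinite simple walk $w$ such that in no factorization $w = w_2 w_1$ is $w_2$ an infinite path, and Lemma \ref{LemmaWalk} produces the desired indecomposable $M(w)$. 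If instead $B$ is infinite, I apply K\"onig's lemma to obtain an infinite simple walk in $Q$. If some such walk is not eventually monotone, Lemma \ref{LemmaWalk} again applies. Otherwise, every infinite simple walk is eventually a sourced or sinked path, and a K\"onig-style argument on the tree of monotone continuations rooted at a suitable vertex produces at least one monotone tail meeting infinitely many vertices of $B$, invoking Lemma \ref{LemmaDegree}.

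For the converse, assume $Q$ is a star quiver with finite body $\Gamma$, rays $R_i: s_i \to v_{i,1} \to v_{i,2} \to \cdots$ for $s_i \in S(\to)$, and co-rays $C_j: \cdots \to w_{j,2} \to w_{j,1} \to t_j$ for $t_j \in (\to)S$. Given an indecomposable $M \in \rep(Q)$, I restrict $M$ to each ray and work in $\rep(R_i) \cong \rep(\mathbb{A}_\infty)$. The indecomposable summands of $M|_{R_i}$ are finite intervals and projective tails $P^{R_i}_{v_{i,j}}$. A standard summand-extraction argument shows that any summand of $M|_{R_i}$ supported strictly away from the body vertex $s_i$ extends to a direct summand of $M$ in $\rep(Q)$; by indecomposability, either $M$ is itself such an extracted summand---so $M = P^Q_{v_{i,j}}$ for some $j \ge 1$ (finitely generated projective) or dually $M = I^Q_{w_{j,i}}$ for some $i \ge 1$ (finitely co-presented), placing $M$ in $\rrep(Q)$ trivially---or all infinite summands of $M|_{R_i}$ are of type $P^{R_i}_{s_i}$ with multiplicity $m_i \le \dim M(s_i) < \infty$, and dually on co-rays. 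In this generic case, choose $N_i$ large enough that $M|_{R_i}$ has stabilized past $v_{i,N_i}$, and set $L := \bigoplus_i (P^Q_{v_{i,N_i}})^{m_i} \subseteq M$; this is finitely generated projective, $M/L$ is finitely co-presented by the dual analysis on co-rays, and only the finitely many arrows $v_{i,N_i-1} \to v_{i,N_i}$ satisfy $M(\alpha) \ne 0$ while $(L \oplus M/L)(\alpha) = 0$, placing $M$ in $\rrep(Q)$.

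The main obstacle I anticipate is the K\"onig-style argument in the case that $B$ is infinite and every infinite simple walk eventually becomes monotone: I must rule out the configuration in which every monotone escape from $Q$ avoids $B$ past some point, despite $B$ being infinite. This is delicate since one can arrange $B$ transversally to the monotone directions, so the argument must carefully exploit local finiteness and the tree structure of walks rooted at a fixed $B$-vertex, pigeonholing so that some infinite branch of the tree passes through infinitely many $B$-vertices.
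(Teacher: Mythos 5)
The forward direction has a genuine gap precisely where you flag it: in the case where the set $B$ of vertices of degree at least three is infinite, you assert that a ``K\"onig-style argument on the tree of monotone continuations'' produces a monotone tail meeting infinitely many vertices of $B$, and your closing paragraph concedes that you cannot yet rule out the configuration in which every monotone escape eventually avoids $B$. Since this is the crux of the implication, the proof is incomplete as written. The claim can be closed as follows (and this is close in spirit to the paper's argument). Fix a vertex $x$. By Lemmas \ref{LemmaWalk} and \ref{LemmaDegree} you may assume that every infinite simple walk from $x$ eventually becomes a monotone path \emph{all of whose vertices have degree two} --- not merely eventually monotone, since a monotone tail with infinitely many $B$-vertices is handled by Lemma \ref{LemmaDegree}, so only finitely many $B$-vertices can lie on it and they can be pushed into the finite prefix. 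Now pick infinitely many distinct $v_i \in B$; by local finiteness their distances to $x$ are unbounded, so the tree of prefixes of simple walks from $x$ to the various $v_i$ is infinite and finitely branching, and K\"onig's lemma gives an infinite simple walk $w$ from $x$ every prefix of which extends to a simple walk reaching some $v_i$ arbitrarily far away. But a sufficiently long prefix of $w$ ends deep inside its degree-two monotone tail, and from a degree-two vertex the only simple continuation is further along that tail, which consists entirely of degree-two vertices and therefore never reaches a vertex of $B$ --- a contradiction. The paper avoids your dichotomy on $|B|$ entirely: after the same reduction it shows, via the invariant $n(w)$ and two K\"onig arguments, that the set of infinite simple walks from $x$ would have to be finite, which contradicts $Q$ not being a star quiver. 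Your $B$-finite case is essentially sound modulo bookkeeping (restoring convexity after absorbing the finite components, and arranging one ray or co-ray per attaching vertex as the definition requires).

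For the converse, your conclusion matches the paper's but the ``standard summand-extraction'' step needs care: for an outgoing ray $s_i \to v_{i,1} \to v_{i,2} \to \cdots$, a direct summand of the restriction $M|_{R_i}$ supported away from $s_i$ is \emph{not} automatically a direct summand of $M$, because the chosen splitting need not be compatible with the arrow $s_i \to v_{i,1}$ entering the tail (the image of $M(\alpha_{s,0})$ need not lie in the complementary summand). The paper sidesteps this by showing directly that for $M$ indecomposable the maps $M(\alpha_{s,i})$ are surjective along rays and injective along co-rays, hence eventually isomorphisms, and then forming the two short exact sequences $0 \to P \to N \to L \to 0$ and $0 \to N \to M \to I \to 0$ with $P$ finitely generated projective, $L$ finite dimensional and $I$ finitely co-generated injective; your final construction of the submodule $L$ is the same idea once that stabilization is established.
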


\begin{proof}
Assume that $Q$ is not a star quiver. Fix $x$ a vertex of $Q$. By using Lemmas \ref{LemmaDegree} and \ref{LemmaWalk}, we may assume that any infinite simple walk starting at $x$ eventually becomes an infinite path whose vertices are all of degree $2$.  Let $S$ be the set of all infinite simple walks starting at $x$. Since $Q$ is not a star quiver, the set $S$ is infinite.  For each $w \in S$, let $w_p$ be the least finite simple walk such that $w=p_ww_p$ where $p_w$ is an infinite path with all vertices occurring in $p$ of degree $2$. Let $v(w)$ denote the head of $w_p$ and let $n(w)$ be the length of $w_p$. Suppose that the set $\{n(w) \mid w \in S\}$ is finite. Let $r > 0$ such that $n(w) < r$ for all $w \in S$. By König's lemma, there is a finite simple walk $\rho$ of length $r+1$ and an infinite subset $S'$ of $S$ such that, for any $w' \in S'$, we have a factorization $w' = \rho_{w'}\rho$. Therefore, for any $w' \in S'$, the walk $\rho_{w'}$ is an infinite path whose vertices are all of degree $2$. In particular, the elements in $S'$ are all equal, a contradiction. Therefore, we may assume that the set $\{n(w) \mid w \in S\}$ is infinite. By König's lemma again, there exists an infinite simple walk $\gamma = \cdots \alpha_2 \alpha_1$ starting at $x$ and an infinite subset $\{\gamma_1, \gamma_2, \ldots\}$ of $S$ such that for $i \ge 1$, $\gamma, \gamma_i$ both start with $\alpha_i \cdots\alpha_2\alpha_1$. By assumption, there exists $s>0$ such that $\cdots \alpha_{s+1}\alpha_s$ is an infinite path with all $t(\alpha_i)$, $i \ge s$, of degree $2$. We see then that $\gamma_{j} = \gamma_s$ whenever $j \ge s$, which contradicts that $\{\gamma_1, \gamma_2, \ldots\}$ is infinite.

Assume now that $Q$ is a star quiver. Then there is a finite full and convex subquiver $\Gamma$ of $Q$ and two finite disjoint sets of vertices $S(\to), (\to)S$ of $\Gamma$ such that $Q$ is obtained from $\Gamma$ by adding vertices $v_{s,i}, i \ge 1,$ for $s \in S(\to), (\to)S$ with the following arrows. For $s \in S(\to)$, the arrows $\alpha_{s,0}: s \to v_{s,1}$ and $\alpha_{s,i}: v_{s, i} \to v_{s, i+1}$ and; for $s \in (\to)S$, the arrows $\alpha_{s,0}: v_{s,1} \to s$ and $\alpha_{s,i}: v_{s, i+1} \to v_{s, i}$. Let $M$ be indecomposable in $\rep(Q)$. It is easy to check that $M(v_{s,i})$ is injective if $s \in (\to)S$ and surjective if $s \in S(\to)$. Therefore, since $S(\to) \cup (\to)S$ is finite, there exists an integer $r > 0$ such that $M(\alpha_{s,i})$ is an isomorphism whenever $i \ge r$ and $s \in S(\to) \cup (\to)S$. Let $\Sigma_1$ be the full subquiver of $Q$ generated by the vertices $v_{s,i}$ for $i \ge r$ and $s \in S(\to)$ and $\Sigma_2$ be the full subquiver of $Q$ generated by the vertices $v_{s,i}$ for $i \ge r$ and $s \in (\to)S$. Let $P$ be the restriction of $M$ to $\Sigma_1$ and $I$ be the restriction of $M$ to $\Sigma_2$. Clearly, $P$ is finitely generated projective and $I$ is finitely co-generated injective. Let $\Sigma_3$ denote the full subquiver of the support of $M$ of the vertices not in $\Sigma_1 \cup \Sigma_2$. Let $L$ denote the restriction of $M$ to $\Sigma_3$, which is finite dimensional. We have short exact sequences
$$0 \to P \to N \to L \to 0$$
$$0 \to N \to M \to I \to 0$$
with any vertex in the support of $I \oplus P$ of degree at most $2$ in $Q$. Since $P$ is finitely presented and $L$ is finite dimensional, $N$ is finitely presented.
Now, if $\beta$ is an arrow with $M(\beta) \ne 0$, then $N(\beta) \ne 0$, $I(\beta) \ne 0$ or else $\alpha$ starts at a vertex $v_{s,r}$ for $s \in (\to)S$. Since $N$ is finitely presented and $I$ is finitely co-presented, this gives that $M \in \rrep(Q)$.
\end{proof}

\medskip

The following result is a consequence of Propositions \ref{PropMain} and \ref{PropMainDual}; compare \cite[Cor 7.2]{Pa}.

\begin{Prop} \label{PropIrredMorphisms}Let $f: L \to M$ be an irreducible morphism in $\rep(Q)$.
\begin{enumerate}[$(1)$]
    \item If $f$ is a monomorphism and $L$ is indecomposable, then ${\rm coker}(f)$ is finitely presented.
\item If $f$ is an epimorphism and $M$ is indecomposable, then ${\rm ker}(f)$ is finitely co-presented.
\end{enumerate}
\end{Prop}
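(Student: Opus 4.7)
The statement is essentially a direct specialization of Propositions \ref{PropMain} and \ref{PropMainDual} to the hereditary setting $\A = kQ$ with $Q$ strongly locally finite, so the plan is to simply reduce each part to the corresponding third case of those propositions.

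For part $(1)$, given the irreducible monomorphism $f : L \to M$ with $L$ indecomposable, I observe that $L$ is trivially a finite direct sum of indecomposable modules (a single summand). Since $\A = kQ$ is hereditary and $Q$ is strongly locally finite, the hypotheses of Proposition \ref{PropMain}(3) are met, and I conclude directly that ${\rm coker}(f)$ is finitely presented.

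For part $(2)$, given the irreducible epimorphism $f : L \to M$ with $M$ indecomposable, the same reasoning applies to the dual statement: $M$ is a finite direct sum of indecomposables (just one), so Proposition \ref{PropMainDual}(3) gives that ${\rm ker}(f)$ is finitely co-presented.

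There is no real obstacle here; the work was done in establishing Propositions \ref{PropMain} and \ref{PropMainDual}. The only minor point to note is that \emph{indecomposable} implies \emph{finite direct sum of indecomposables} so that clause (3) of each proposition is literally applicable. The proof is therefore a one-line invocation of each of the two earlier propositions.
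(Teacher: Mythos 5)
Your proposal is correct and is exactly the argument the paper intends: it states Proposition \ref{PropIrredMorphisms} as "a consequence of Propositions \ref{PropMain} and \ref{PropMainDual}" with no further proof, and the intended reduction is precisely your observation that an indecomposable module is a finite direct sum of indecomposables, so that clause (3) of each proposition applies in the hereditary, strongly locally finite setting of Section 5.
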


The following nice fact will be very useful in the description of the Auslander-Reiten quiver of $\rep(Q)$. For $L,M \in \rep(Q)$ and $i$ a positive integer, we denote by ${\rm rad}^i(L,M)$ the morphisms $L \to M$ that lie in ${\rm rad}^i(\rep(Q))$.

\begin{Prop} \label{LastProp}
Let $M \in \rep(Q)$ be indecomposable which is neither finitely presented nor finitely co-presented. Then there is at most one indecomposable object $L$ in $\rep(Q)$ with an irreducible morphism $L \to M$. In this case, ${\rm rad}(L,M)/{\rm rad}^2(L,M)$ is one dimensional as a right $\End(L)/{\rm rad}(L,L)$-module.
\end{Prop}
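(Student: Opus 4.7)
My approach is by contradiction. Suppose there exist two indecomposable $L_1, L_2 \in \rep(Q)$ together with irreducible morphisms $f_i: L_i \to M$ whose classes in ${\rm rad}(L_i,M)/{\rm rad}^2(L_i,M)$ are ``linearly independent'' in the combined sense: either $L_1 \not\cong L_2$, or $L_1 \cong L_2 = L$ and $f_1, f_2$ are linearly independent modulo ${\rm rad}^2(L,M)$. By standard Krull-Schmidt / Auslander-Reiten considerations the combined morphism $g := (f_1, f_2): L_1 \oplus L_2 \to M$ is then irreducible in $\rep(Q)$. Since $M$ is indecomposable, the image factorization $L_1 \oplus L_2 \twoheadrightarrow {\rm Im}(g) \hookrightarrow M$ combined with irreducibility forces $g$ to be either a monomorphism or an epimorphism.

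If $g$ is a monomorphism, applying Proposition \ref{PropMain}(3) to the finite direct sum $L_1 \oplus L_2$ shows that $N := {\rm coker}(g)$ is finitely presented. Each restriction $f_i: L_i \hookrightarrow M$ is itself an irreducible monomorphism with $L_i$ indecomposable, so Proposition \ref{PropIrredMorphisms}(1) gives that $M/L_i$ is finitely presented. Since $L_1 \cap L_2 = 0$, the second isomorphism theorem provides a short exact sequence $0 \to L_2 \to M/L_1 \to N \to 0$ with both outer terms finitely presented; using that $\rep^+(Q)$ is abelian (see \cite{BLP}), the kernel $L_2$ is itself finitely presented. Then $M$, as the middle term of $0 \to L_2 \to M \to M/L_2 \to 0$ with both ends finitely presented, is finitely presented --- contradicting the hypothesis that $M$ is not finitely presented.

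If $g$ is an epimorphism, Proposition \ref{PropMainDual}(3) gives that $K := {\rm ker}(g)$ is finitely co-presented. When both $f_i$ are epimorphisms, each $K_i := {\rm ker}(f_i)$ is finitely co-presented by Proposition \ref{PropIrredMorphisms}(2), and the pullback description $K \cong L_1 \times_M L_2$ yields a short exact sequence $0 \to K_1 \to K \to L_2 \to 0$ in which both $K$ and $K_1$ are finitely co-presented; the dual fact that $\rep^-(Q)$ is abelian gives $L_2$ finitely co-presented, whence $M = L_2/K_2$ is too, contradicting the hypothesis. When instead at least one $f_i$ is a monomorphism, an analogous pullback and cokernel analysis shows that $M$ is an extension of a finitely presented module by a finitely co-presented submodule and therefore lies in $\rrep(Q)$; by the transfer principle of \cite{Pa_rrep} both $f_1, f_2$ lie in $\rrep(Q)$, and the contradiction is extracted from the explicit description of the Auslander-Reiten components of $\rrep(Q)$ in \cite{Pa_rrep} --- specifically from the fact that a regular indecomposable (neither finitely presented nor finitely co-presented) in a quasi-wing component admits at most one incoming irreducible morphism inside $\rrep(Q)$. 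This last subcase is the main obstacle of the proof: the contradiction cannot be drawn purely from Propositions \ref{PropMain}, \ref{PropMainDual} and \ref{PropIrredMorphisms} combined with the abelian-subcategory facts on $\rep^+(Q)$ and $\rep^-(Q)$, and one must appeal to the finer structural description of $\rrep(Q)$ from \cite{Pa_rrep}.
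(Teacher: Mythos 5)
Your proposal is correct and follows essentially the same route as the paper: form the combined irreducible morphism $(f_1,f_2):L_1\oplus L_2\to M$, split into mono/epi cases, use Propositions \ref{PropMain}, \ref{PropMainDual}, \ref{PropIrredMorphisms} together with the fact that the finitely presented (resp.\ finitely co-presented) representations form an abelian extension-closed subcategory, and in the remaining sub-case reduce to $M\in\rrep(Q)$ and invoke the structure of the Auslander--Reiten quiver of $\rrep(Q)$ from \cite{Pa_rrep} --- exactly the appeal the paper makes via \cite[Theorem 6.10, Proposition 7.5]{Pa_rrep}. (Only a cosmetic slip: in the monomorphism case the sequence $0\to L_2\to M/L_1\to N\to 0$ has its \emph{last two} terms known to be finitely presented, from which $L_2$ is deduced.)
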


\begin{proof}
Suppose that the result is not true. Then there are two indecomposable representations $L_1, L_2$ and two irreducible morphisms $f_1: L_1 \to M, f_2: L_2 \to M$ such that if $L_1 \cong L_2$, then $f_1, f_2$ are linearly independent as elements of the right $\End(L_1)/{\rm rad}(L_1,L_1)$-module ${\rm rad}(L_1,M)/{\rm rad}^2(L_1,M)$. In particular, we get an irreducible morphism $F:=(f_1, f_2): L_1 \oplus L_2 \to M$; see \cite[Prop. 3.2, Prop. 3.3]{Bau}. If both $f_1, f_2$ are epimorphism, we get exact sequences
$$0 \to K_1 \to K \to L_2 \to 0$$
$$0 \to K_2 \to K \to L_1 \to 0$$
where $K_i$ is the kernel of $f_i$ and $K$ is the kernel of $F$. Note that $K, K_1, K_2$ are finitely co-presented. The full subcategory of $\mmod \A$ of the finitely co-presented representations is abelian and extension closed; see for instance \cite[Prop. 2.1]{Aus}. Hence, each of $L_1, L_2$ is finitely co-presented which makes $M$ finitely co-presented, a contradiction. Suppose now that both $f_1, f_2$ are monomorphism. If, in addition, $F$ is also a monomorphism, we get exact sequences
$$0 \to L_2 \to M/L_1 \to C \to 0$$
$$0 \to L_1 \to M/L_2 \to C \to 0$$
where $C$ is the cokernel of $F$. Since all of $C, M/L_1, M/L_2$ are finitely presented and the full subcategory of $\rep(Q)$ of the finitely presented representations is abelian and extension closed; see for instance \cite[Prop. 2.1]{Aus}, we get that $L_1, L_2$ are finitely presented and hence $M$ finitely presented, a contradiction. If $f_1, f_2$ are monomorphism but $F$ is an epimorphism, then each $L_i$ is an extension of a finitely presented module by a finitely co-presented one. By \cite[Lemma 3.3, Theorem 6.10, Proposition 7.5]{Pa_rrep}, $L_1, L_2 \in \rrep(Q)$. Since the kernel of $F$ is finitely co-presented, it also lies in $\rrep(Q)$. Since $\rrep(Q)$ is abelian, $M \in \rrep(Q)$. If follows from \cite[Theorem 6.10]{Pa} that $L_1 = L_2$ and ${\rm rad}(L_1,M)/{\rm rad}^2(L_1,M)$ is one dimensional as a right $\End(L_1)/{\rm rad}(L_1,L_1)$-module. Now, assume that $f_1$ is a monomorphism and $f_2$ is an epimorphism. With a similar argument, we get that $L_2$ is an extension of a finitely presented module by a finitely co-presented one. By \cite[Proposition 7.5]{Pa}, $M$ is finitely presented or finitely co-presented, a contradiction.
\end{proof}

\begin{Lemma} Let $M,N \in \rep(Q)$ be indecomposable. Then the dimension of ${\rm rad}(M,N)/{\rm rad}^2(M,N)$ as a right $\End(M)/{\rm rad}(M,M)$ module is the same as the dimension of ${\rm rad}(M,N)/{\rm rad}^2(M,N)$ as a right $\End(N)/{\rm rad}(N,N)$ module.
\end{Lemma}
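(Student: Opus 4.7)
My plan is to reduce the asserted equality of bimodule dimensions to a statement about a single $k$-vector space dimension by showing that the relevant residue division algebras are both equal to $k$ itself. Since $M$ and $N$ are indecomposable in $\rep(Q)$, the Gabriel--Roiter proposition tells us that $\End(M)$ and $\End(N)$ are local $k$-algebras, so $D_M := \End(M)/{\rm rad}(M,M)$ and $D_N := \End(N)/{\rm rad}(N,N)$ are division algebras over $k$. The heart of the argument is the claim that $D_M = D_N = k$; this is not immediate because $M$ or $N$ may be infinite-dimensional, and $\End(M), \End(N)$ may then also be infinite-dimensional over $k$, so the usual finite-dimensional Schur-type argument does not apply directly.

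To establish $D_M = k$, I would pick any vertex $a \in Q_0$ with $M(a) \neq 0$, which exists since $M$ is nonzero, and observe that $M(a)$ is a nonzero finite-dimensional $k$-vector space because $M$ is locally finite-dimensional. For any $f \in \End(M)$, the component $f_a \in \End_k(M(a))$ has an eigenvalue $\lambda \in k$, since $k$ is algebraically closed and $M(a)$ is nonzero and finite-dimensional. Setting $g := f - \lambda \cdot \id_M$, its $a$-component $g_a = f_a - \lambda \cdot \id_{M(a)}$ has nontrivial kernel, so $g$ cannot be an isomorphism of $M$. Since $\End(M)$ is local, the non-units coincide with ${\rm rad}(M,M)$, hence $g \in {\rm rad}(M,M)$, and thus $f \equiv \lambda \cdot \id_M$ modulo ${\rm rad}(M,M)$. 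This shows that every class in $D_M$ is represented by a scalar multiple of $\id_M$, so $D_M = k$. The identical argument applied to $N$ gives $D_N = k$.

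With both $D_M$ and $D_N$ identified with $k$, the $D_M$-action and the $D_N$-action on ${\rm rad}(M,N)/{\rm rad}^2(M,N)$ both coincide with the natural $k$-vector space structure inherited from $\rep(Q)$ being a $k$-category (and the left/right distinction is vacuous since $k$ is commutative). Consequently both dimensions in the statement equal $\dim_k {\rm rad}(M,N)/{\rm rad}^2(M,N)$, and the stated equality follows at once. The main obstacle I expect is precisely the subtlety in identifying $D_M$ with $k$ in the possibly infinite-dimensional setting; the pointwise eigenvalue argument circumvents this by using only that $M$ takes a nonzero finite-dimensional value at some vertex and that $\End(M)$ is local.
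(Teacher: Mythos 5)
Your proof is correct, but it takes a genuinely different route from the paper. The paper argues by cases: citing \cite[Prop.~7.5]{Pa_rrep}, either both $M,N$ lie in $\rrep(Q)$ --- where the equality follows from the known description of the Auslander--Reiten quiver of $\rrep(Q)$ --- or neither does, in which case Proposition \ref{LastProp} and its dual show both dimensions equal $1$. You instead prove the stronger, purely local fact that $\End(M)/{\rm rad}(M,M)\cong k$ for every indecomposable $M\in\rep(Q)$, which immediately collapses both module structures to the underlying $k$-vector space structure. Your eigenvalue argument is sound: for a vertex $a$ with $M(a)\neq 0$, the component $f_a$ has an eigenvalue $\lambda$ since $k=\bar k$ and $M(a)$ is finite dimensional, so $f-\lambda\cdot 1_M$ fails to be injective at $a$ and is therefore a non-unit, hence lies in ${\rm rad}(M,M)$ because $\End(M)$ is local (the Gabriel--Roiter proposition); and ${\rm rad}(M,M)$ is indeed the Jacobson radical of $\End(M)$ by the paper's definition of the categorical radical. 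You also correctly flag the genuine subtlety --- that $\End(M)$ need not be finite dimensional, so one cannot invoke the usual Schur-type argument directly. What your approach buys: it is self-contained (no appeal to \cite{Pa_rrep} or to Proposition \ref{LastProp}), it works uniformly without the case distinction or the implicit reduction to ${\rm rad}(M,N)/{\rm rad}^2(M,N)\neq 0$, and it generalizes verbatim to $\mmod\A$ for any spectroid $\A$. What the paper's proof buys in exchange is the extra quantitative information that the common dimension is $1$ when $M,N\notin\rrep(Q)$, which your argument does not recover but which the lemma does not assert.
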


\begin{proof} Assume that ${\rm rad}(M,N)/{\rm rad}^2(M,N)$ is non-zero. It follows from \cite[Prop. 7.5]{Pa_rrep} that either both $M,N$ lie in $\rrep(Q)$ or both $M,N$ do not lie in $\rrep(Q)$. In the first case, the result follows from the description of the Auslander-Reiten quiver of $\rrep(Q)$ in \cite[Section 6]{Pa_rrep}. In the second case, the dimension of ${\rm rad}(M,N)/{\rm rad}^2(M,N)$ as a right $\End(M)/{\rm rad}(M,M)$ module is one by Proposition \ref{LastProp}. By the dual of Proposition \ref{LastProp}, the dimension of ${\rm rad}(M,N)/{\rm rad}^2(M,N)$ as a left $\End(N)/{\rm rad}(N,N)$ module is also one.
\end{proof}

\begin{Lemma} \label{LemmaCompIrred} Let $h$ be a composition of irreducible morphisms between indecomposable objects in $\rep(Q)$. Then the kernel of $h$ is finitely co-presented and its cokernel is finitely presented.
\end{Lemma}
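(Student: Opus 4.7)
The plan is to proceed by induction on the length $n$ of the composition $h = f_n \circ \cdots \circ f_1$, where each $f_i \colon M_i \to M_{i+1}$ is irreducible between indecomposable objects. A key preliminary observation is that each $f_i$ must be a monomorphism or an epimorphism: the canonical factorization $M_i \twoheadrightarrow {\rm Im}(f_i) \hookrightarrow M_{i+1}$ together with irreducibility forces one of these maps to split, and the indecomposability of $M_i$ or $M_{i+1}$ then promotes the split map to an isomorphism. The base case $n = 1$ is then immediate from Proposition \ref{PropIrredMorphisms}: if $f_1$ is a monomorphism, ${\rm ker}(f_1) = 0$ and ${\rm coker}(f_1)$ is finitely presented; if $f_1$ is an epimorphism, the dual statement holds.

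For the inductive step, I would write $h = f_{n+1} \circ g$ with $g = f_n \circ \cdots \circ f_1$, and exploit the standard six-term exact sequence
\[
0 \to {\rm ker}(g) \to {\rm ker}(h) \to {\rm ker}(f_{n+1}) \to {\rm coker}(g) \to {\rm coker}(h) \to {\rm coker}(f_{n+1}) \to 0,
\]
combined with the inductive hypothesis that ${\rm ker}(g)$ is finitely co-presented and ${\rm coker}(g)$ is finitely presented. If $f_{n+1}$ is a monomorphism, then ${\rm ker}(h) \cong {\rm ker}(g)$ is finitely co-presented, while ${\rm coker}(h)$ sits in a short exact sequence between the finitely presented modules ${\rm coker}(g)$ and ${\rm coker}(f_{n+1})$, hence is finitely presented by the extension closure of $\rep^+(Q)$.

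The main obstacle is the case when $f_{n+1}$ is an epimorphism. Writing $K$ for the image of the connecting morphism ${\rm ker}(f_{n+1}) \to {\rm coker}(g)$, the six-term sequence yields ${\rm coker}(h) \cong {\rm coker}(g)/K$ and exhibits ${\rm ker}(h)$ as an extension of $A := {\rm ker}({\rm ker}(f_{n+1}) \to K)$ by ${\rm ker}(g)$. The heart of the proof will be to show $K$ has finite support, and is therefore finite dimensional: the finitely co-presented module ${\rm ker}(f_{n+1})$ embeds in some $\bigoplus_j I_{b_j}$ whose support is a finite union of predecessor sets, while the finitely presented module ${\rm coker}(g)$ is a quotient of some $\bigoplus_i P_{a_i}$ whose support is a finite union of successor sets, and the interval finiteness of $Q$ forces the intersection of these supports to be finite. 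Once $K$ is finite dimensional, it is both finitely generated and finitely co-generated; lifting a finitely generated projective presentation of ${\rm coker}(g)$ and applying Lemma \ref{LemmaProjSub}, the pullback of $K$ is a finitely generated subobject of a finitely generated projective, hence itself finitely generated projective, yielding a finitely generated projective presentation of ${\rm coker}(h)$. The dual argument applied in $\mmod \A^{\rm op}$ (using the duality $\mathcal{D}$) shows that $A$ is finitely co-presented, after which ${\rm ker}(h)$ is finitely co-presented by the extension closure of $\rep^-(Q)$. This completes the induction.
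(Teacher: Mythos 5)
Your argument is correct, and at its core it runs on the same fuel as the paper's proof: induction on the length of the composition, Proposition \ref{PropIrredMorphisms} for the base case, a case split according to whether the irreducible morphism being peeled off is a monomorphism or an epimorphism, extension closure of the finitely presented and finitely co-presented subcategories for the easy case, and, for the hard case, the observation that a representation which is simultaneously a quotient of a finitely co-generated module and a subobject of a finitely generated one has support contained in a finite union of sets of the form ${\rm succ}(a_i)\cap{\rm pred}(b_j)$, hence is finite dimensional by interval finiteness. The differences are organizational but genuine. The paper peels off the \emph{innermost} irreducible morphism, writes $h=g\circ f$, proves only the kernel statement (there the epimorphism case is the trivial extension $0\to\ker f\to\ker (gf)\to\ker g\to 0$ and the monomorphism case is the hard one, via the finite dimensionality of $\ker g/(\ker g\cap M_{n+1})$), and leaves the cokernel statement to duality. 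You peel off the \emph{outermost} morphism and track both kernel and cokernel simultaneously through the six-term kernel--cokernel sequence, so your hard case is the dual one (outermost morphism an epimorphism), with the finite dimensionality of $K={\rm im}(\ker f_{n+1}\to{\rm coker}(g))$ playing the role of the paper's finite dimensional quotient. Your version buys a self-contained treatment of both conclusions in one induction; the paper's is shorter because it only ever manipulates kernels. A minor simplification: you do not need Lemma \ref{LemmaProjSub} to see that ${\rm coker}(g)/K$ is finitely presented --- the preimage of $K$ in a finitely generated projective presenting ${\rm coker}(g)$ is an extension of the finite dimensional module $K$ by a finitely generated module, hence finitely generated, which already gives the required presentation.
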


\begin{proof}
We only prove the first part of the statement. Let $M_1, \ldots, M_{n+1}$ be indecomposable objects in $\rep(Q)$ with irreducible morphisms $f_i: M_{i+1} \to M_{i}$ for $1 \le i \le n$. Consider $h = f_{1} \circ f_2  \circ \cdots  \circ f_n$. We prove by induction on $n$ that $h$ has a finitely co-presented kernel. The case where $n=1$ follows from Proposition \ref{PropIrredMorphisms}. Assume that $n>1$ and let $g = f_{1} \circ \cdots \circ f_{n-1}$, which, by induction, has a finitely co-presented kernel. Set $f = f_n$, which has also a finitely co-presented kernel. Assume first that $f$ is an epimorphism. We have a short exact sequence
$$0 \to {\rm ker}f \to {\rm ker}gf  \to {\rm ker}g \to 0$$
Being an extension of two finitely co-presented representations, the representation ${\rm ker}gf$ is also finitely co-presented. Assume now that $f$ is a monomorphism. With no loss of generality, we may assume that $M_{n+1}$ is a subrepresentation of $M_n$. Observe that ${\rm ker}gf = {\rm ker}g \cap M_{n+1}$. We have a monomorphism
$$\frac{{\rm ker}g}{{\rm ker}g \cap M_{n+1}} \hookrightarrow M_n/M_{n+1} \cong {\rm coker}f.$$
Since ${\rm coker}f$ is finitely presented by Proposition \ref{PropIrredMorphisms}, the support of ${\rm coker}f$ does not contain infinite sinked-paths. Therefore, the same holds for the support of $\frac{{\rm ker}g}{{\rm ker}g \cap M_{n+1}}$. Similarly, since ${\rm ker}g$ is finitely co-presented, the support of ${\rm ker}g$ does not contain infinite sourced-paths. Thus, the same holds for the support of $\frac{{\rm ker}g}{{\rm ker}g \cap M_{n+1}}$. This shows that $\frac{{\rm ker}g}{{\rm ker}g \cap M_{n+1}}$ is finite dimensional. This together with the fact that ${\rm ker}g$ is finitely co-presented yield that ${\rm ker}g \cap M_{n+1}$ is finitely co-presented. This finishes the proof of the Lemma.
\end{proof}

\begin{Prop} \label{LastPropCycle}
Let $M_1, \ldots, M_{n+1}$ be indecomposable objects in $\rep(Q)$ with irreducible morphisms $f_i: M_{i} \to M_{i+1}$ for $1 \le i \le n$. Then $M_1$ is not isomorphic to $M_{n+1}$.
\end{Prop}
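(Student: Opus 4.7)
Suppose for contradiction that $M_1 \cong M_{n+1}$. Fixing such an isomorphism, I treat $M_1$ and $M_{n+1}$ as the same object and set $h := f_n \circ f_{n-1} \circ \cdots \circ f_1 \in \End(M_1)$. My plan is to analyze $h$ and derive a contradiction via two cases, distinguished by whether $h$ is globally nilpotent.

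First, $h$ is not an isomorphism: otherwise $(f_{n-1} \circ \cdots \circ f_1) \circ h^{-1}$ would be a right inverse to $f_n$, making $f_n$ a retraction and contradicting its irreducibility. Since $M_1$ is indecomposable, $\End(M_1)$ is local by Gabriel--Roiter's proposition, so $h$ lies in its radical. Applying the Fitting-type decomposition from the proof of that proposition vertex-by-vertex on the finite-dimensional spaces $M_1(a)$, together with the indecomposability of $M_1$ and the fact that $h$ is not an isomorphism, $h_a$ is nilpotent on $M_1(a)$ for every $a \in Q_0$. Consequently the ascending chain $\ker h \subseteq \ker h^2 \subseteq \cdots$ exhausts $M_1$. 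Moreover, for each $k \geq 1$ the endomorphism $h^k$ is a composition of $nk$ irreducible morphisms between indecomposables, so Lemma \ref{LemmaCompIrred} yields that $\ker h^k$ is finitely co-presented and ${\rm coker}\,h^k$ is finitely presented.

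Case 1: $h^N = 0$ for some $N \geq 1$. Then $\ker h^N = M_1$ is finitely co-presented and ${\rm coker}\,h^N = M_1$ is finitely presented, so $M_1 \in \rrep(Q)$. Since each $f_i$ is a nonzero radical morphism between indecomposables, \cite[Proposition 7.5]{Pa_rrep} forces every $M_i$ to lie in $\rrep(Q)$. But the Auslander--Reiten quiver of $\rrep(Q)$, described in \cite[Section 6]{Pa_rrep}, has components that are full convex subquivers of $\mathbb{N}Q^{\rm op}$, $\mathbb{N}^-Q^{\rm op}$, or $\mathbb{Z}\mathbb{A}_\infty$; none of these contains an oriented cycle of irreducible morphisms between indecomposables, contradicting the cycle $M_1 \to M_2 \to \cdots \to M_n \to M_1$.

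Case 2: $h^k \neq 0$ for all $k \geq 1$. The local nilpotency of $h$ forces $h_a$ to be neither injective nor surjective whenever $M_1(a) \neq 0$, so $\ker h$ and ${\rm coker}\,h$ share the support of $M_1$. By the support arguments used in the proof of Lemma \ref{LemmaCompIrred}, the support of $M_1$ then contains neither infinite sourced nor infinite sinked paths. Moreover, since $h$ is a non-nilpotent radical element of the local $k$-algebra $\End(M_1)$, a minimal-polynomial argument shows $\End(M_1)$ is infinite-dimensional; by Lemma \ref{LemmaEasy} and its dual, $M_1$ is neither finitely generated nor finitely co-generated, hence neither finitely presented nor finitely co-presented. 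Proposition \ref{LastProp} and its dual therefore apply to every $M_i$, forcing $M_{i-1}$ to be the unique indecomposable predecessor and $M_{i+1}$ the unique indecomposable successor of $M_i$, with the corresponding ${\rm rad}/{\rm rad}^2$-spaces one-dimensional. The remaining, and main, technical obstacle is to combine the strict filtration $\ker h \subsetneq \ker h^2 \subsetneq \cdots$ by finitely co-presented submodules exhausting $M_1$ with the support conditions on $M_1$ and the interval-finiteness of $Q$ to produce a contradiction --- in effect, showing that the uniform support constraints inherited from the finitely co-presented submodules $\ker h^k$ force $M_1$ itself to be finitely co-presented, reducing Case 2 to Case 1.
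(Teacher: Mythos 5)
Your overall strategy is the paper's: form $h=f_n\circ\cdots\circ f_1$, note it is a non-isomorphism in the local ring $\End(M_1)$ and hence vertexwise nilpotent, and feed its powers to Lemma \ref{LemmaCompIrred}. Your Case 1 is fine. But Case 2 carries all the actual content, and there you stop one step short of a conclusion and label that step an open ``technical obstacle,'' so as written the proof is incomplete. Worse, the route you propose for closing the gap --- using the exhaustive filtration ${\rm ker}\,h\subseteq{\rm ker}\,h^2\subseteq\cdots$ to show $M_1$ is finitely co-presented --- is not a promising one: an increasing union of finitely co-presented submodules need not be finitely co-presented, and nothing in your setup controls that limit.

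The missing step is much more direct, and you have already assembled everything it needs. You correctly observe that vertexwise nilpotency of $h$ forces ${\rm ker}\,h$ and ${\rm coker}\,h$ to have the same support as $M_1$. Now unwind the definitions: since ${\rm coker}\,h$ is finitely generated, there are finitely many vertices $a_1,\ldots,a_r$ such that every vertex in the support of ${\rm coker}\,h$, i.e.\ of $M_1$, is reached by a path from some $a_j$; since ${\rm ker}\,h$ is finitely co-generated, there are finitely many vertices $b_1,\ldots,b_s$ such that every vertex in the support of $M_1$ admits a path to some $b_i$. Hence every vertex of the support of $M_1$ lies on a path from some $a_j$ to some $b_i$, and interval finiteness of $Q$ leaves only finitely many such paths, each with finitely many vertices. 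So $M_1$ is finite dimensional, hence lies in $\rrep(Q)$, and the no-cycle statement for the Auslander--Reiten quiver of $\rrep(Q)$ that you already invoked in Case 1 gives the contradiction. Note that this argument nowhere uses whether $h$ is globally nilpotent, so the case split --- and with it the digression through $\dim_k\End(M_1)=\infty$ and Proposition \ref{LastProp} (whose hypotheses you have in any event only verified for $M_1$, not for every $M_i$) --- can simply be deleted.
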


\begin{proof}
The proposition holds when all representations lie in $\rrep(Q)$; see \cite{Pa_rrep}. Moreover, if one $M_i$ lies in $\rrep(Q)$, then all $M_i$ lie in $\rrep(Q)$ by a remark at the beginning of this section. Therefore, we may assume that no $M_i$ lies in $\rrep(Q)$. Assume to the contrary that $M_1 \cong M_{n+1}$. We may then assume that $M_1 = M_{n+1}$. Consider the endomorphism $f:=f_n\circ\cdots\circ f_1$ of $M_1$. It follows from Lemma \ref{LemmaCompIrred} that $f$ has a finitely co-presented kernel and a finitely presented cokernel. Since the $f_i$ are irreducible, $f$ is not an isomorphism. Since ${\rm End}(M_1)$ is local, it means that each $f_x$ for $x \in Q_0$ is nilpotent. Therefore, for $M_1(x) \ne 0$, the map $f_x$ is neither injective nor surjective. In particular, the representations ${\rm ker}f, {\rm coker}f$ have the same support as $M_1$. Since ${\rm ker}f$ is finitely co-presented, its support has no infinite sourced-paths. Similarly, since ${\rm coker}f$ is finitely presented, its support has no infinite sinked-paths. Therefore, $M_1$ is finite dimensional and in particular, lies in $\rrep(Q)$, a contradiction.
\end{proof}

The results obtained so far allow us to describe combinatorially the irreducible morphisms in $\rep(Q)$. More precisely, we can describe the shapes of the Auslander-Reiten quiver, $\Gamma_{\rep(Q)}$, of $\rep(Q)$, which is defined as follows. The vertices of $\Gamma_{\rep(Q)}$ is given by a complete set of representatives of the indecomposable representations of $\rep(Q)$. If $M, N$ are two vertices of $\Gamma_{\rep(Q)}$, then the number of arrows $M \to N$ is the dimension of ${\rm rad}(M,N)/{\rm rad}^2(M,N)$ as a right $\End(M)/{\rm rad}(M,M)$ module, which is the same as the dimension of ${\rm rad}(M,N)/{\rm rad}^2(M,N)$ as a right $\End(N)/{\rm rad}(N,N)$ module. Since both $M,N$ have local endomorphism algebras, it is not hard to check that a non-isomorphism $f: M \to N$ is non-zero seen as an element in ${\rm rad}(M,N)/{\rm rad}^2(M,N)$ if and only if it is irreducible. Therefore, the quiver $\Gamma_{\rep(Q)}$ is a combinatorial description of the irreducible morphisms in $\rep(Q)$.

\medskip

Before stating the main theorem, we need to define the shapes of the possible connected components of $\Gamma_{\rep(Q)}$. Let $\Delta$ be the translation quiver $\mathbb{Z}\mathbb{A}_\infty$ with translation $\tau$. A \emph{quasi-simple} vertex of $\Delta$ is a vertex having only one immediate predecessor and only one immediate successor. Let $a_0$ be a fixed quasi-simple vertex in $\Delta$. The complete list of pairwise distinct quasi-simple vertices of $\Delta$ is $\{a_i:=\tau^i(a_0) \mid i \in \Z\}$. Let $I$ be an interval of $\Z$. The \emph{quasi-wing} $I$, denoted $W_I$, is the full convex subquiver of $\Delta$ generated by the $a_i$ for $i \in I$. If $I$ is bounded above but not below, then $W_I$ is a \emph{right infinite quasi-wing}. If $I'$ is another interval with the same property, then $W_{I'}$ is isomorphic to $W_I$. If $I$ is bounded below but not above, then $W_I$ is a \emph{left infinite quasi-wing}. If $I'$ is another interval with the same property, then $W_{I'}$ is isomorphic to $W_I$. If $I$ is finite, then $W_I$ is a \emph{finite quasi-wing}. Clearly, such a finite quasi-wing only depends on the length of $I$. Finally, if $I = \Z$, then $W_I = \Delta$. By a \emph{quasi-wing}, we mean a quiver of the form $W_I$ for some non-empty interval $I$ of $\Z$. Notice that if $I$ has one element, then $W_I$ is a single vertex and is also called a \emph{trivial component}.

\medskip

A full connected subquiver of $\cdots \circ \to \circ \to \circ \to \cdots$ is called a \emph{linear quiver}. These quivers play a special role in the description of $\Gamma_{\rep(Q)}$ when $Q$ is not a star quiver. As a consequence of Propositions \ref{LastProp}, \ref{LastPropCycle} and \ref{Proprrep} and \cite[Section 6]{Pa_rrep}, we get a description of the Auslander-Reiten quiver of $\rep(Q)$.

\begin{Theo} \label{Theorem2} Let $Q$ be connected infinite and strongly locally finite. The Auslander-Reiten quiver of $\rep(Q)$ has the following connected components.
\begin{enumerate}[$(1)$]
    \item A unique preprojective component $\mathcal{P}_Q$, which is a full predecessor closed subquiver of $\mathbb{N}Q^{\rm op}$. It is equal to $\mathbb{N}Q^{\rm op}$ if and only if $Q$ has no infinite sourced paths. All the finitely generated projective indecomposable representations lie in $\mathcal{P}_Q$.
\item A unique preinjective component $\mathcal{I}_Q$, which is a full successor closed subquiver of $\mathbb{N}^-Q^{\rm op}$. It is equal to $\mathbb{N}^-Q^{\rm op}$ if and only if $Q$ has no infinite sinked paths. All the finitely co-generated injective indecomposable representations lie in $\mathcal{I}_Q$.
\item An infinite number of quasi-wings, if $Q$ is not infinite Dynkin. Otherwise, no quasi-wing for type $\mathbb{A}_\infty$, one quasi-wing for type $\mathbb{D}_\infty$ and two quasi-wings for type $\mathbb{A}_\infty^\infty$. If there is a right infinite (resp. left infinite, finite) quasi-wing, then $Q$ has infinite sinked (resp. infinite sourced, both infinite sinked and infinite sourced) paths.
\item Some additional linear quivers, if and only if $Q$ is not a star quiver.
\end{enumerate}
\end{Theo}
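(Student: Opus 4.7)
The plan is to partition the indecomposable objects of $\rep(Q)$ into those lying in $\rrep(Q)$ and those not, and to handle each part by invoking already-established results. The components made out of objects in $\rrep(Q)$ will account for parts $(1)$, $(2)$, $(3)$, while the components made out of objects not in $\rrep(Q)$ will account for part $(4)$.

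For $(1)$, $(2)$, $(3)$, the key input is the remark made at the beginning of Section 5: any irreducible morphism between indecomposables of $\rep(Q)$ with one endpoint in $\rrep(Q)$ has both endpoints in $\rrep(Q)$. Consequently, the connected components of $\Gamma_{\rrep(Q)}$ appear verbatim as connected components of $\Gamma_{\rep(Q)}$. The description given in \cite[Section 6]{Pa_rrep} then yields the preprojective component $\mathcal{P}_Q$ of $(1)$, the preinjective component $\mathcal{I}_Q$ of $(2)$, and the quasi-wings in the multiplicities recorded in $(3)$. The statements about finitely generated projectives, finitely co-generated injectives, and the conditions on infinite sourced/sinked paths all come from the corresponding statements in \cite{Pa_rrep}.

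For $(4)$, one first observes that by definition of $\rrep(Q)$, every indecomposable that is finitely presented or finitely co-presented lies in $\rrep(Q)$; hence an indecomposable $M$ outside $\rrep(Q)$ is simultaneously not finitely presented and not finitely co-presented. Proposition \ref{Proprrep} says that such $M$ exist if and only if $Q$ is not a star quiver, which settles the "if and only if" part of $(4)$. Assuming $Q$ is not a star quiver and fixing such an $M$, Proposition \ref{LastProp} together with the Lemma on equality of left and right multiplicities guarantees that $M$ has at most one immediate predecessor in $\Gamma_{\rep(Q)}$, linked to $M$ by a single arrow; the dual of Proposition \ref{LastProp} (via Proposition \ref{PropMainDual}) gives the same statement for immediate successors. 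Combined with the remark invoked above, one sees that the connected component of $M$ consists entirely of objects not in $\rrep(Q)$ and that every vertex in it has at most one arrow in and at most one arrow out, each of multiplicity one. Proposition \ref{LastPropCycle} then prohibits any oriented cycle. A connected quiver with these two properties is precisely a connected full subquiver of $\cdots \to \circ \to \circ \to \circ \to \cdots$, namely a linear quiver.

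The heavy lifting has already been done in Propositions \ref{LastProp}, \ref{LastPropCycle}, and \ref{Proprrep}, so the remaining task is essentially bookkeeping: glue the $\rrep(Q)$-part to the complementary part, verify that the two parts are disconnected in $\Gamma_{\rep(Q)}$ (which is precisely the content of the remark from Section 5), and perform the short combinatorial classification of connected quivers with at-most-one-in-and-out valency and no cycles. The only subtle point I would expect to double-check is the multiplicity statement from the Lemma preceding Proposition \ref{LastPropCycle}, which is needed to convert Proposition \ref{LastProp} (stated in terms of right $\End(L)/{\rm rad}$-module dimensions) into a statement about the number of arrows in $\Gamma_{\rep(Q)}$ seen from either side.
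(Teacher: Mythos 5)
Your proposal is correct and follows essentially the same route as the paper, which derives the theorem directly from Propositions \ref{LastProp}, \ref{LastPropCycle}, \ref{Proprrep}, the remark that irreducible morphisms with one endpoint in $\rrep(Q)$ lie entirely in $\rrep(Q)$, and the description of the Auslander--Reiten quiver of $\rrep(Q)$ in \cite[Section 6]{Pa_rrep}. Your additional bookkeeping (that indecomposables outside $\rrep(Q)$ are neither finitely presented nor finitely co-presented, and the combinatorial classification of connected quivers with in- and out-valency at most one and no oriented cycles) fills in exactly the steps the paper leaves implicit.
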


\begin{Exam}
Let $Q$ be the quiver
$$\xymatrixcolsep{10pt}\xymatrixrowsep{2pt}\xymatrix{& 5\ar@{.}[dl] \ar[dr] && 3\ar[dl] \ar[dr] && 1\ar[dl] \ar[dr]
& \\
&& 4 && 2 && 0}$$ For $i \ge 0$, let $M_i$ be
the indecomposable representation of $\rep(Q)$ with $M(j)=k$
for all $j \ge i$ and $M(j)=0$, otherwise. Since $Q$ has no infinite
path, $\rrep(Q)$ is the category of finite dimensional
representations. The only indecomposable infinite dimensional
representations of $\rep(Q)$, up to isomorphisms, are the $M_i$. The
only connected component of $\Gamma_{\rep(Q)}$ which does not contain finite dimensional representations is the following linear quiver:
$$\xymatrixrowsep{10 pt}\xymatrixcolsep{10 pt}\xymatrix{\cdots\ar[r] & M_4 \ar[r] &  M_2 \ar[r]  & M_0\ar[r]
 &  M_1 \ar[r] & M_3\ar[r] & M_5 \ar[r]   & \cdots \\
}
 $$
\end{Exam}

\begin{Exam}
Let $Q$ be the quiver
$$\xymatrixcolsep{10pt}\xymatrixrowsep{2pt}\xymatrix{& 5\ar@{.}[dl] \ar[dr] && 3\ar[dl] \ar[dr] && 1\ar[dl] \ar[dr]
& \\
&& 4 && 2 && 0\ar[dr] \\ &&  &&  && & -1\ar[dr] \\ &&  &&  && && -2\ar[dr] \\&&  &&  && && & \ddots}$$ For $i \in \Z$, let $M_i$ be
the indecomposable representation of $\rep(Q)$ with $M(j)=k$
for all $j \ge i$ and $M(j)=0$, otherwise. Denote by $M_\infty$ the indecomposable representation with $M(j)=k$ for all $j \in \Z$. Up to isomorphism, the only indecomposable
representations of $\rep(Q)$ which are not in $\rrep(Q)$ are the $M_i$ together with $M_\infty$. The connected components of $\Gamma_{\rep(Q)}$ which do not contain representations in $\rrep(Q)$ are the following linear quivers:
$$\xymatrixrowsep{10 pt}\xymatrixcolsep{10 pt}\xymatrix{\cdots\ar[r] & M_4 \ar[r] &  M_2 \ar[r]  & M_\infty}$$
$$\xymatrixrowsep{10 pt}\xymatrixcolsep{10 pt}\xymatrix{\cdots\ar[r]
 &  M_{-2} \ar[r] & M_{-1}\ar[r] & M_0 \ar[r]   & M_1 \ar[r] & M_3 \ar[r] & \cdots}
$$
\end{Exam}

\noindent{\emph{Acknowledgment}.} The author is supported by the Department of Mathematics at the University of Connecticut.

\end{document}